\DeclarePairedDelimiter\abs{\lvert}{\rvert}%
\DeclarePairedDelimiter\norm{\lVert}{\rVert}%
\let\oldabs\abs
\def\abs{\@ifstar{\oldabs}{\oldabs*}}
\let\oldnorm\norm
\def\norm{\@ifstar{\oldnorm}{\oldnorm*}}
\g@addto@macro\bfseries{\boldmath}
\newcommand{\A}{\mathcal{A}}
\newcommand{\C}{\mathcal{C}}
\newcommand{\M}{\mathcal{M}}
\newcommand{\N}{\mathcal{N}}
\newcommand{\T}{\mathbb{T}}
\newcommand{\conj}[1]{\overline{#1}}
\newcommand{\D}{\mathbb{D}}
\newcommand{\Po}{\mathcal{P}}
\newcommand{\cD}{\conj{\mathbb{D}}}
\newcommand{\m}{\textit{m}}
\newcommand{\hd}{Hol(\D)}
\newcommand{\Hb}{\mathcal{H}(b)}
\newcommand{\hb}{\mathcal{H}(b)}
\newcommand{\h}{\mathcal{H}}
\renewcommand{\k}{\mathcal{K}}
\newcommand{\K}{\mathcal{K}}
\newtheorem{thm}{Theorem}[section]
\newtheorem{lemma}[thm]{Lemma}
\newtheorem{cor}[thm]{Corollary}
\newtheorem{prop}[thm]{Proposition}
\theoremstyle{definition}
\theoremstyle{definition}
\newcommand{\Addresses}{{% additional braces for segregating \footnotesize
		\bigskip
		\footnotesize
		
		Adem Limani, \\ \textsc{Centre for Mathematical Sciences, Lund University, \\
			Lund, Sweden}\\
		\texttt{adem.limani@math.lu.se}
		
		\medskip
		
		Bartosz Malman, \\ \textsc{KTH Royal Institute of Technology, \\
			Stockholm, Sweden}\\
			\texttt{malman@kth.se}
		
	}}
\begin{document}
\title[Smooth approximations in de Branges-Rovnyak spaces]{\textbf{On the problem of smooth approximations in de Branges-Rovnyak spaces and connections to subnormal operators}}
\author{Adem Limani and Bartosz Malman}

\maketitle

\begin{abstract}
For the class of de Branges-Rovnyak spaces $\hb$ of the unit disk $\D$ defined by extreme points $b$ of the unit ball of $H^\infty$, we study the problem of approximation of a general function in $\hb$ by a function with an extension to the unit circle $\T$ of some degree of smoothness, for instance satisfying H\"older estimates or being differentiable. We will exhibit connections between this question and the theory of subnormal operators and, in particular, we will tie the possibility of smooth approximations to properties of invariant subspaces of a certain subnormal operator. This leads us to several computable conditions on $b$ which are necessary for such approximations to be possible. For a large class of extreme points $b$ we use our result to obtain explicit necessary and sufficient conditions on the symbol $b$ which guarantee the density of functions with differentiable boundary values in the space $\hb$. These conditions include an interplay between the modulus of $b$ on $\T$ and the spectrum of its inner factor.
\end{abstract}

\section{Introduction and the main result}

Let $b: \D \to \D$ be an analytic function, where $\D$ denotes the unit disk of the complex plane $\mathbb{C}$. The space $\hb$, introduced in \cite{debranges2015square} by de Branges and Rovnyak is the Hilbert space of analytic functions on $\D$ with reproducing kernels of the form \begin{equation} \label{kernelform} k_b(\lambda,z) = \frac{1-\conj{b(\lambda)}b(z)}{1-\conj{\lambda}z}, \quad \lambda,z \in \D. \end{equation}
De Branges and Rovnyak used $\hb$ and vector-valued generalizations of these spaces, together with the classical backward shift operator, as models for contractive operators on a Hilbert space. The general theory of $\hb$ was further developed by Sarason in a series of articles, and he summarized the work of himself and other researchers in the book \cite{sarasonbook} (see also the more recent monograph \cite{hbspaces1fricainmashreghi}, \cite{hbspaces2fricainmashreghi}). Importantly, the work of Sarason exhibited several connections between the theory of $\hb$-spaces and other parts of operator and function theory. 

The purpose of this note is to bring to light another such connection.  We will show how the problem of smooth approximations in $\hb$-spaces is connected to the theory of subnormal operators and derive some concrete results from this connection. The approximation problem can be stated in the following way. Let $X$ be some class of analytic functions in $\D$ with extensions to the unit circle $\T$ of some degree of smoothness. What are the conditions on $b$ which guarantee that for each $f \in \hb$ and each $\epsilon > 0$ we can find a function $g \in X \cap \hb$ such that $\|f-g\|_{\hb} < \epsilon$? The problem is only interesting in the case of $b$ being an extreme point of the unit ball, or equivalently, in the case of the boundary values of $b$ being so close to 1 in modulus that $\log( 1-|b|)$ is not integrable on $\T$. If this quantity is integrable then it is well-known, and it was proved by Sarason, that the space $\hb$ contains all polynomials and they form a dense subset of the space (see \cite{sarasonbook}). In contrast, in the extreme case only very few polynomials can be a member of $\hb$, and indeed it is often a non-trivial task to verify the existence of any non-zero function with any degree of boundary regularity in such a space. In this note, we will thus deal exclusively with the extreme case.

Function classes $X$ that we will consider in this note are to be more than simply continuous on $\T$. The continuous case has been previously investigated in \cite{comptesrenduscont}, and the conclusion is that no assumptions on $b$ whatsoever are necessary for this kind of approximation: the analytic functions extending continuously to the boundary (we will denote this space by $\A$) are always dense in $\hb$. In the present work, we want to instead study approximations by functions satisfying, for instance, classical H\"older or Lipschitz estimates, but also smaller classes such as $\A^k$, which is the space of analytic functions in $\D$ such that $f$, together with its first $k$ derivatives, extends continuously to $\T$. A simple and useful way to capture some notion of boundary smoothness of an analytic function in $\D$ is to specify the rate at which the Taylor coefficients of the function decrease. For a positive number $\alpha > 0$, $X_\alpha$ will thus denote the following Hilbert space of analytic functions in $\D$: 
\begin{equation}\label{Xalphadef}
X_\alpha = \{ f \in \hd : \| f\|^2_\alpha := \sum_{n=0}^\infty {(n+1)}^\alpha |f_n|^2 < \infty \},
\end{equation} the symbol $f_n$ denoting the n:th Taylor coefficient of the Taylor expansion of $f$ at the origin. The class $X_1$ is the classical Dirichlet space, and for large $\alpha$ the space $X_\alpha$ consists entirely of functions with a number of derivatives which extend continuously to $\T$. In fact, when $\alpha = 2k$ with positive integer $k$, then we basically recover the classical Hardy-Sobolev spaces. 

A subnormal operator $M$ is the restriction to an invariant subspace $\K$ of a normal operator on a Hilbert space $\h$. Thus $\K \subseteq \h$ and $M\K \subseteq \K$. A typical example is $\Po^2(\mu) \subseteq L^2(\mu)$ and the operator $M_z: f(z) \mapsto zf(z)$, where $\mu$ is a compactly supported measure in the complex plane, $L^2(\mu)$ is the usual space of $\mu$-square-integrable functions, and $\Po^2(\mu)$ is the closed subspace spanned by analytic polynomials. Any cyclic subnormal operator appears in this way for some measure $\mu$ (see, for instance, \cite{conway1991theory}). The measures $\mu$ which will appear in the context of our approximation problems are of the form $d\mu = w dA + \omega d\m$, where $dA$ and $d\m$ are (normalized) area and arclength measures on the disk $\D$ and circle $\T$, respectively, and $w$ and $\omega$ are integrable functions, $w$ being radial: $w(z) = w(|z|)$. In this case, the measure is supported on the closed unit disk and the operator $M_z$ is clearly a contraction. 

The following problem for measures of the above form has already been extensively studied in multiple contexts, sometimes in disguise: when is $M_z: \Po^2(\mu) \to \Po^2(\mu)$ completely non-unitary? By this we mean that there exists no $M_z$-invariant subspace of $\Po^2(\mu)$ on which $M_z$ acts as a unitary operator. In the setting which we consider below, we will need the slightly stronger assumption that $M_z$ is \textit{completely non-isometric}, that is, admits no $M_z$-invariant subspace on which this operator acts as an isometry. More explicitly, there should exist no non-zero element $f \in \Po^2(\mu)$ which satisfies \[ \|f\|^2_{\mu,2} := \int_{\cD} |f(z)|^2 d\mu(z) = \int_{\cD} |z^nf(z)|^2 d\mu(z) = \|M_z^nf\|_{\mu,2}, \] for all positive integers $n$. The number of works on this problem is huge, and we would like to in particular mention articles \cite{kriete1990mean} and \cite{khrushchev1978problem} which will provide interesting examples relevant for the present work. If $w$ is, say, continuous and positive on $\D$ (as it will be in our applications), then convergence of a sequence of polynomials $\{p_n\}_{n \geq 1}$ in $\Po^2(\mu)$ to $f$ implies that the restriction of $f$ to $\D$ is an analytic function on $\D$, and it is easy to see that $M_z: \Po^2(\mu) \to \Po^2(\mu)$ is completely non-isometric if and only $\Po^2(\mu)$ does not contain any non-zero element $f \in L^2(\mu)$ which vanishes on $\D$, and thus lives only on the circle $\T$. Note that in this case, and this is very important for our purposes, $\Po^2(\mu)$ can be interpreted as a space of analytic functions on $\D$. However, even in the case of measures of the form above, a complete characterization of when this happens seems to be very difficult to obtain. We will see how this problem is connected to smooth approximations in $\hb$, and that the many available partial results will nevertheless help us reach several interesting conclusions.

Our main theorem will be stated in terms of a measure of the form mentioned above. Given an extreme point $b$ of the unit ball of $H^\infty$, and a parameter $\alpha > 0$, we shall now construct a measure $\mu = \mu(b,\alpha)$ on the closed unit disk $\cD$ and a corresponding space $\Po^2(\mu)$, which encodes in properties of the operator $M_z: \Po^2(\mu) \to \Po^2(\mu)$ the possibility of approximation of an arbitrary function in $\hb$ by a function in $X_\alpha$. Let \begin{equation} \label{diskweight} w(z) = w_{\alpha-1}(z) := (1-|z|^2)^{\alpha-1} \end{equation} be the weight on the disk $\D$,\begin{equation}\label{circleweight} \Delta(z) := \sqrt{1-|b(z)|^2} \end{equation} be the weight on $\T$, and define the measure \begin{equation}
\label{measure} d\mu = d\mu(b,\alpha) := w_{\alpha-1} dA + \Delta^2 d\m.
\end{equation}

The following is our main theorem on the connection between the smooth approximation problem in $\hb$ spaces and the structure of invariant subspaces of the operator $M_z: \Po^2(\mu) \to \Po^2(\mu)$. In the statement below, the space $\N^+$ is the Smirnov class of the unit disk. Recall that a function $f$ belongs to $\N^+$ if and only if it can be expressed as $f = u/v$, where $u,v$ are bounded and $v$ is an outer function (the elementary theory of the Smirnov class can be found in \cite{garnett}). If $f$ is a function in $\Po^2(\mu)$, we denote by $[f]$ the smallest closed $M_z$-invariant subspace containing $f$. Clearly, $g \in [f]$ if and only if there exists a sequence of polynomials $\{p_n\}_{n \geq 1}$ such that $\lim_{n \to \infty} \|fp_n - g\|_{\mu,2} = 0$. 

\begin{thm} \thlabel{maintheorem}
Let $b$ be an extreme point of the unit ball of $H^\infty$, $\alpha > 0$ and $\mu$ be defined by \eqref{measure}. Further, let $\theta$ be the inner factor of $b$ and the space $X_\alpha$ be defined according to \eqref{Xalphadef}. The following statements are equivalent.

\begin{enumerate}[(i)]
\item The linear manifold $X_\alpha \cap \hb$ is dense in $\hb$.
\item The operator $M_z: \Po^2(\mu) \to \Po^2(\mu)$ is completely non-isometric and the following property holds: if $f \in \N^+$ is a function contained in the invariant subspace $[\theta] \subset \Po^2(\mu)$, then $f/\theta \in \N^+$. 
\end{enumerate}
\end{thm}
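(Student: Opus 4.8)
The plan is to convert the approximation statement into a question about annihilators and cyclic behavior in $\Po^2(\mu)$, using a concrete model of $\hb$ as the bridge. First I would invoke Sarason's functional model in the extreme case: a function $f \in H^2$ belongs to $\hb$ exactly when its boundary values admit a companion living in $\text{clos}(\Delta H^2) \subset L^2(d\m)$, which under the isometry $g \mapsto \Delta g$ corresponds to an element of $L^2(\Delta^2 d\m)$, the $\hb$-norm splitting into an $H^2$ part for $f$ and an $L^2(\Delta^2 d\m)$ part for the companion. This exhibits the boundary weight $\Delta^2 d\m$ of $\mu$ as the natural home for the second component. The disk weight $w_{\alpha-1} = (1-|z|^2)^{\alpha-1}$ enters dually: since $\int_{\D} |f|^2 w_{\alpha-1}\, dA \asymp \sum_n (n+1)^{-\alpha}|f_n|^2$, the weighted Bergman space carrying the disk part of $\mu$ is precisely the dual of $X_\alpha$ under the $H^2$ pairing. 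Combining the two pieces, the measure $\mu$ is rigged so that testing an $\hb$-annihilator of $X_\alpha$ against smooth functions becomes testing an $L^2(\mu)$-function against polynomials.

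With this dictionary in place, I would phrase density by duality: $X_\alpha \cap \hb$ is dense in $\hb$ iff no nonzero functional on $\hb$ annihilates it. The transfer above represents each such functional by a function $G \in L^2(\mu)$ that is $\mu$-orthogonal to all polynomials, i.e.\ $G \perp \Po^2(\mu)$, subject to the analyticity and Smirnov constraints inherited from the model. I expect exactly two independent ways in which this representation can produce a nonzero obstruction, and the main structural task is to prove these are the only two. The first is a boundary obstruction: a nonzero element of $\Po^2(\mu)$ supported only on $\T$ — equivalently, since $w_{\alpha-1}$ is positive and continuous on $\D$, a failure of complete non-isometry of $M_z$ — yields boundary data on the $\Delta^2 d\m$ component that no disk-analytic smooth function can reach; ruling this out is exactly clause (ii). The second is an inner obstruction from the inner factor $\theta$ of $b$: because $K_\theta := H^2 \ominus \theta H^2$ embeds in $\hb$, the approximants must reproduce the $\theta$-governed part, and this succeeds precisely when every $f \in \N^+$ lying in $[\theta]$ satisfies $f/\theta \in \N^+$, so that the invariant subspace generated by $\theta$ in $\Po^2(\mu)$ hides no Smirnov function secretly failing to be divisible by $\theta$.

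For (i) $\Rightarrow$ (ii) I would argue contrapositively: from a violation of complete non-isometry I produce, by pushing the circle-supported $\Po^2(\mu)$-element back through the model, a nonzero $\hb$-functional killing every polynomial and hence every $X_\alpha$ function; and from a Smirnov function $f \in [\theta]$ with $f/\theta \notin \N^+$ I build, via the $K_\theta$ embedding, an element of $\hb$ lying in the closure-gap of $X_\alpha \cap \hb$. For the reverse (ii) $\Rightarrow$ (i), complete non-isometry lets me identify $\Po^2(\mu)$ with a genuine space of analytic functions on $\D$ and realize polynomial approximants as $X_\alpha \cap \hb$ elements converging in $\hb$-norm, while the division property guarantees that the $\theta$-part is exhausted rather than leaving a residual invariant subspace unaccounted for.

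The step I expect to be the main obstacle is proving that the inner-factor obstruction is governed \emph{exactly} by the Smirnov division property in $[\theta]$. This requires controlling how multiplication by $\theta$ interacts with polynomial closure in $L^2(\mu)$, and in particular showing both that a defect in density forces a Smirnov function in $[\theta]$ which is not $\theta$-divisible and, conversely, that such a function always obstructs approximation. Establishing this precise equivalence, rather than a one-sided estimate, is where the careful analysis of $[\theta] \subset \Po^2(\mu)$ together with the Smirnov-class bookkeeping will concentrate.
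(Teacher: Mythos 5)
Your high-level plan coincides with the paper's: Sarason's model $J:\hb\to H^2\oplus L^2(E)$ with $J(\hb)^\perp=\{(bh,\Delta h):h\in H^2\}$, the norm equivalence $\int_\D|f|^2w_{\alpha-1}\,dA\simeq\|f\|_{-\alpha}^2$ realizing the disk part of $\mu$ as the Cauchy dual of $X_\alpha$, and the identification of exactly two obstructions (isometric part of $M_z$; failure of $\theta$-division in $[\theta]$). But the proposal is a roadmap rather than a proof: the two places you yourself flag as ``the main structural task'' and ``the main obstacle'' are precisely where all the work lies, and they are not carried out. Concretely, for $(ii)\Rightarrow(i)$ you must take $f\in\hb$ orthogonal to $X_\alpha\cap\hb$, show via the Cauchy duality that $Jf$ lies in the weak* closure of the convex set $\{(bh,\Delta h)\}_{h\in H^2}$, upgrade this to norm closure by convexity so that $bh_k\to f$ in $\Po^2(\mu)$, use complete non-isometry to identify the second component as $g=\Delta f/b$ on $E$, invoke the division property plus the Smirnov maximum principle to get $f/\theta\in H^2$, and finally compute $P_+(\conj{b}f+\Delta g)=P_+(f/b)=0$ to force $f=0$. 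None of this final chain appears in your sketch; ``the division property guarantees that the $\theta$-part is exhausted'' does not substitute for it.

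Two of your specific reductions are also off. First, you claim the annihilating functional is represented by $G\in L^2(\mu)$ with $G\perp\Po^2(\mu)$; in fact the correct conclusion is the opposite in flavor — the annihilating element $f$ ends up \emph{inside} $\Po^2(\mu)$, as a norm limit of $bh_k$, i.e.\ in the invariant subspace generated by $b$ (hence in $[\theta]$), which is what makes the division hypothesis applicable. Second, for the inner obstruction in $(i)\Rightarrow(ii)$ you propose building a closure-gap element ``via the $K_\theta$ embedding''; the paper does not need and does not use $K_\theta\subseteq\hb$ here. The working argument is direct: if $\theta p_n\to f$ in $\Po^2(\mu)$ with $f\in\N^+$, then for every $s\in X_\alpha\cap\hb$ the orthogonality relations of the model pass to the limit and show that $(b_0f,\Delta\conj{\theta}f)$ annihilates $J(X_\alpha\cap\hb)$; density then forces $(b_0f,\Delta\conj{\theta}f)=(bh,\Delta h)$ for some $h\in H^2$, whence $f/\theta=h\in\N^+$. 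Until these computations (and the corresponding one for the isometric obstruction, which hinges on the second component of $Jk_b(\lambda,\cdot)$ generating all of $L^2(E)$ under $M_z^*$ when $b(\lambda)\neq0$) are supplied, the proposal does not constitute a proof.
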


The density of function classes $X_\alpha$ in $\hb$ is thus reduced to the study of some specific structure of certain invariant subspaces of $\Po^2(\mu)$. The inner function division property described in $(ii)$ should be compared to the arguments appearing in \cite{comptesrenduscont}, where a similar condition plays a role in establishing the density of the disk algebra $\A$ in $\hb$. Moreover, it can be deduced from results of \cite{smoothdensektheta} that the density of smooth functions $\A^\infty := \cap_{n \geq 1} \A^k$ in a model space $K_\theta$ (that is, an $\hb$ space generated by an inner function $b = \theta$) is equivalent to this division property with $\mu = dA$.

The rest of the paper is structured as follows. In the next section, we present applications of the \thref{maintheorem} which will provide us a good understanding of what the different properties of the symbol $b$ imply for our approximation question. In particular, we shall illustrate three principal ways in which smooth approximations are prohibited. We will also construct a very large family of symbols $b$ which avoid in some ways the three mentioned obstructions, and we will show that for this family of symbols $b$, the set $X_\alpha \cap \hb$ is dense in $\hb$. In Section \ref{Xalphasection}, we present some basic facts about the $X_\alpha$ spaces. The proof of \thref{maintheorem} is deferred to Sections  \ref{isomsection}, \ref{divsec} and \ref{suffsec}.

%We want to remark that the connection between $X_\alpha$ and $w_{\alpha-1}$ is the norm equivalence $\|f\|^2_\alpha \simeq \int_\D |f|^2 w_{\alpha -1} dA$, and these two objects can be replaced in the above theorem by any space $X$ with norm of the type $\|f\|^2_X = \sum_{n=1}^n c_n |f_n|^2$ and a weight $w$ on $\D$ for which the same norm equivalence holds. Our results can in this way be generalized. 

\section{Applications of the theorem}

\thref{maintheorem} and the theory of subnormal operators allows for immediate deduction of interesting facts about smooth approximations on $\hb$-spaces, defined by extreme points $b$ of the unit ball of $H^\infty$. In fact, in parallel with this work, the authors developed the theory in \cite{ptmuinnner} partly for the purpose of this application, and that work will provide us with very general results and interesting examples. Outside this reference, also examples from \cite{khrushchev1978problem} and \cite{kriete1043splitting} will be very useful.

We start by presenting the explicit quantitative conditions that we have found which prohibit smooth approximations in $\hb$-spaces. As mentioned earlier, our investigation exhibits three principal ways in which this happens. The first way deals with the structure of the carrier set of the weight $\Delta$ defined in \eqref{circleweight} above, second one deals with the size of the weight $\Delta$ on the carrier set, and the third deals with an interplay between the singularities of the inner factor of $b$ and the location of the mass of $\Delta$. For the purpose of our presentation we will need to introduce classes of sets and measures on the circle $\T$. For a closed set $E$ on $\T$, we consider the complement $\T \setminus E = \cup_{k} A_k$, where $A_k$ are disjoint open circular arcs. The set $E$ is a \textit{Beurling-Carleson set} if the following condition is satisfied: 

\begin{gather} \label{carlesoncond}
%\int_{\T \setminus E} \log \left( \frac{1}{\text{dist}(\zeta, E) }\right) dm(\zeta) \sim 
\sum_k |A_k|\log \left( \frac{1}{ |A_k| } \right) < \infty. 
\end{gather}
These sets appear in the theory of Bergman spaces of Korenblum (see \cite{hedenmalmbergmanspaces}), often with the extra condition that the Lebesgue measure $|E|$ of $E$ is zero, and they appear also in the theory of subnormal operators, where they usually have positive measure (see \cite{kriete1990mean} or \cite{khrushchev1978problem}). We will use Beurling-Carleson sets of both zero and positive measure in the presentation of our examples. In the examples below, we let $\mu$ be given by \eqref{measure} and denote by \begin{equation} \label{setEdef} E := \{ \zeta \in \T : |b| < 1 \} = \{ \zeta \in \T : \Delta > 0 \} \end{equation} the carrier set of $\Delta$. Our first application of \thref{maintheorem} shows that the structure of $E$ can prohibit smooth approximations.

\begin{cor}
\thlabel{hruscevexample} Assume that $E$ is such that there exists no Beurling-Carleson set $G$ of positive measure such that $|G \setminus E| = 0$. Then $X_\alpha \cap \hb$ is not dense in $\hb$, for any $\alpha > 0$. 
\end{cor}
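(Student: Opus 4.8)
The plan is to prove the contrapositive dictated by \thref{maintheorem}: since $(i)$ is equivalent to $(ii)$, and $(ii)$ demands in particular that $M_z \colon \Po^2(\mu) \to \Po^2(\mu)$ be completely non-isometric, it suffices to show that under the hypothesis on $E$ this operator fails to be completely non-isometric. Once an isometric part is exhibited, $(ii)$ cannot hold, so $(i)$ fails and $X_\alpha \cap \hb$ is not dense, for every $\alpha > 0$. Observe that $\alpha$ will enter only through the disk weight $w_{\alpha-1}$, and only through its positivity, which is why the conclusion will be uniform in $\alpha$.

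First I would reduce complete non-isometry to a concrete statement about boundary-living functions. Because $w_{\alpha-1}(z) = (1-|z|^2)^{\alpha-1}$ is strictly positive on $\D$ for every $\alpha > 0$, the criterion recalled in the introduction applies: $M_z$ is completely non-isometric if and only if $\Po^2(\mu)$ contains no nonzero element vanishing $dA$-almost everywhere on $\D$. Indeed, for $f \in \Po^2(\mu)$ one has $\|M_z^n f\|_{\mu,2}^2 - \|f\|_{\mu,2}^2 = \int_\D (|z|^{2n}-1)\abs{f}^2 w_{\alpha-1}\, dA$, the circle contribution cancelling since $|z|=1$ there; as $|z|^{2n}-1 < 0$ on $\D$, this difference vanishes for all $n$ precisely when $f=0$ $dA$-a.e.\ on $\D$. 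Thus the task becomes the construction of a nonzero $f \in \Po^2(\mu)$ supported on $\T$, that is, a nonzero $L^2(\mu)$-limit of analytic polynomials whose restrictions to $\D$ tend to $0$ while their traces on $E$ do not.

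The heart of the matter, and the step I expect to be the main obstacle, is this construction, which I would not carry out by hand but instead import from the companion results of \cite{ptmuinnner} (whose relevant examples go back to the circle of ideas of Khrushchev \cite{khrushchev1978problem}). These results describe the isometric part of the subnormal operator $M_z$ directly through the Beurling–Carleson structure of the carrier set $E$: the polynomial approximation process can absorb the boundary mass $\Delta^2 d\m$ into the analytic part only over pieces of $E$ that are essentially Beurling–Carleson. When, as assumed here, no Beurling–Carleson set $G$ of positive measure satisfies $|G \setminus E| = 0$, this absorption is impossible on any set of positive measure, and one extracts a nonzero function in $\Po^2(\mu)$ living entirely on $\T$. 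Heuristically, one builds analytic functions sharply concentrated on $E$ and small both inside $\D$ and off $E$; the Beurling–Carleson-freeness of $E$ is exactly what prevents the extracted limit from collapsing to $0$ on $E$, so that a genuine boundary-living element survives. (We may assume $|E|>0$, since otherwise the circle part of $\mu$ is absent and the hypothesis merely describes the degenerate regime in which $b$ is inner.)

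With such an $f$ in hand, $M_z$ admits a nonzero invariant subspace on which it acts isometrically, so it is not completely non-isometric. Consequently statement $(ii)$ of \thref{maintheorem} fails, and therefore so does $(i)$: the manifold $X_\alpha \cap \hb$ is not dense in $\hb$. Since the weight $w_{\alpha-1}$ entered only through its positivity on $\D$, the argument is insensitive to the value of $\alpha$, and the conclusion holds for every $\alpha > 0$.
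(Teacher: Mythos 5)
Your argument is correct and follows essentially the same route as the paper: both deduce non-density from \thref{maintheorem} by exhibiting an isometric part of $M_z$, and both delegate the key step --- that a carrier set $E$ essentially containing no Beurling--Carleson set of positive measure forces a nonzero boundary-living element of $\Po^2(\mu)$ --- to the literature. The paper obtains this directly from Khrushchev's Theorem 10.1 (and Kriete--MacCluer), which give that $L^2(\mu|\T)$ is a direct summand of $\Po^2(\mu)$, rather than from \cite{ptmuinnner}, but the substance is the same.
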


The assumption means that $E$ essentially does not contain Beurling-Carleson sets of positive measure. In particular, $E$ contains no interval, but of course much more is true. The corollary follows from applying \cite[Theorem 10.1]{khrushchev1978problem} to our criterion given in \thref{maintheorem}. See also \cite[Theorem 1.2]{kriete1990mean}. The cited results imply that $L^2(\mu|\T)$ is a summand of $\Po^2(\mu)$, and on this subspace $M_z$ certainly acts as an isometric operator. Moreover, sets of Cantor-type are constructed in \cite{khrushchev1978problem} which satisfy the condition in \thref{hruscevexample} and can be used to produce our first explicit examples of $\hb$ in which no smooth approximations are possible. Take such a set $E$ and any $b$ such that $|b| = 1$ on $\T \setminus E$ but $|b| < 1$ almost everywhere on $E$. Irrespective of the inner factor of $b$, the linear manifolds $X_\alpha \cap \hb$ are never dense in $\hb$ for such choices of $b$. In fact, it can be deduced from the proof of \thref{isomnodense} below (see the remark below the proof) that if $\theta$ is the inner factor of $b$, then such a space $\hb$ will contain a non-zero function in the class $X_\alpha$ if and only if $K_\theta$ will contain one, since $K_\theta \cap X_\alpha = \hb \cap X_\alpha$ in that case.

The previous example shows how the structure of the carrier of $\Delta$ can disallow smooth approximations. The size of $\Delta$ also plays a role, which is measured by (local) integrability of $\log \Delta$. More precisely, Volberg constructed a very small weight $W$ on $\T$ with the property that $\log W$ is not integrable on any arc of $\T$, and showed that $M_z$ on $\Po^2(\mu)$ with $d\mu = dA + W d\m$ contains $L^2(W d\m)$ as a direct summand. This example is contained in the editor's commentary to \cite{kriete1043splitting}. Meanwhile, it is worth mentioning that if $\log W$ is integrable on some arc of $\T$, then $L^2(W d\m)$ is not a direct summand of $\Po^2(\mu)$ (see Theorem D, \cite{kriete1043splitting}). By re-scaling Volberg's weight $W$ so that $W < 1/2$ almost everywhere on $\T$, we can then construct an outer function $b$ with modulus satisfying $|b|^2 = 1-W^2$, and then \thref{maintheorem} implies the following.

\begin{cor} \thlabel{volbergexample} There exists an outer function $b$ with $|b| < 1$ almost everywhere on $\T$, thus $E = \T$ in \eqref{setEdef}, such that $X_\alpha \cap \hb$ is not dense in $\hb$, for any $\alpha > 0$. 
\end{cor}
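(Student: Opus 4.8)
The plan is to produce the outer symbol $b$ by hand from Volberg's weight and then force the operator $M_z$ on $\Po^2(\mu)$ to violate the isometric condition in $(ii)$ of \thref{maintheorem}; by the equivalence in that theorem this immediately rules out the density of $X_\alpha \cap \hb$. Concretely, I would first rescale Volberg's weight so that $W < 1/2$ almost everywhere on $\T$, retaining the crucial property that $\log W$ is non-integrable on every arc, and define $b$ as the outer function with boundary modulus $|b| = \sqrt{1-W^2}$. This is legitimate because $1-W^2 \in (3/4,1]$ forces $\log(1-W^2) \in L^1(\T)$, so the outer function exists; and since $1-|b| \asymp W^2$, the function $\log(1-|b|)$ fails to be integrable on every arc exactly as $\log W$ does, so that $b$ is genuinely an extreme point of the unit ball of $H^\infty$ and \thref{maintheorem} applies. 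By construction $|b| < 1$ almost everywhere, so $E = \T$, and $\Delta^2 = 1-|b|^2 = W^2$, whence $d\mu = w_{\alpha-1}\,dA + W^2\,d\m$.

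The reduction step is then as follows. Since $w_{\alpha-1}$ is continuous and positive on $\D$, the operator $M_z$ on $\Po^2(\mu)$ is completely non-isometric if and only if $\Po^2(\mu)$ contains no nonzero element supported on $\T$. Hence it suffices to exhibit a single such element, and I will show that the characteristic function $\mathbbm{1}_\T$, equal to $1$ on $\T$ and $0$ on $\D$, lies in $\Po^2(\mu)$. As $\|\mathbbm{1}_\T\|^2_{\mu,2} = \int_\T W^2\,d\m > 0$, this function is nonzero and vanishes on $\D$, so its membership makes $M_z$ fail to be completely non-isometric. Condition $(ii)$ of \thref{maintheorem} therefore fails irrespective of the inner division property, and the theorem yields that $X_\alpha \cap \hb$ is not dense in $\hb$.

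It remains to place $\mathbbm{1}_\T$ in $\Po^2(\mu)$ for every $\alpha > 0$. Volberg's theorem furnishes polynomials $p_n$ with $\int_\D |p_n|^2\,dA \to 0$ and $\int_\T |p_n-1|^2 W\,d\m \to 0$, i.e. $p_n \to \mathbbm{1}_\T$ in $\Po^2(dA + W\,d\m)$. The boundary integrals transfer for free to our measure for all $\alpha$: using $W < 1/2$ one has $W^2 \leq \tfrac12 W$, so $\int_\T |p_n-1|^2 W^2\,d\m \to 0$. For the disk integrals, when $\alpha \geq 1$ the bound $w_{\alpha-1} \leq 1$ gives $\int_\D |p_n|^2 w_{\alpha-1}\,dA \leq \int_\D |p_n|^2\,dA \to 0$, and we are done for that range with Volberg's own polynomials.

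The main obstacle is the range $0 < \alpha < 1$, where $w_{\alpha-1} = (1-|z|^2)^{\alpha-1}$ blows up at $\T$ and no longer dominates $dA$, so Volberg's $L^2(dA)$-control does not transfer automatically. To overcome this I would extract, or arrange in the construction, a uniformly bounded approximating sequence $\sup_n \|p_n\|_{H^\infty} \leq C$, and then split the disk integral across a thin boundary annulus $\{1-\delta < |z| < 1\}$ and its complement: on the annulus the uniform bound together with $\int_{1-\delta < |z| < 1}(1-|z|^2)^{\alpha-1}\,dA \asymp \delta^\alpha$ makes the contribution $O(\delta^\alpha)$, while on $\{|z| \leq 1-\delta\}$ the weight is at most $\delta^{\alpha-1}$ and the term is controlled by $\int_\D |p_n|^2\,dA \to 0$; choosing $\delta$ first and then $n$ yields smallness for every $\alpha > 0$ from one bounded sequence. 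Alternatively, one may bypass the explicit polynomials by invoking a general splitting theorem for measures $G\,dA + \omega\,d\m$ with standard radial Bergman weights $G = (1-|z|^2)^{\alpha-1}$ from \cite{kriete1990mean} and \cite{kriete1043splitting}, whose criterion for $L^2(\omega\,d\m)$ to split off is precisely the non-local-integrability of $\log\omega$, a condition that $\omega = W^2$ inherits from $W$. Securing the uniform bound on the approximants, or the precise form of the cited splitting theorem covering all $\alpha > 0$, is the one point that demands care.
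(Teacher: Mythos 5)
Your construction is the same as the paper's: rescale Volberg's weight so that $W<1/2$, take $b$ outer with $|b|^2=1-W^2$, observe that Volberg's splitting gives $M_z$ a non-trivial isometric part on $\Po^2(\mu)$, and invoke \thref{maintheorem} (in fact only \thref{isomnodense} is needed, and your reduction to exhibiting a non-zero element of $\Po^2(\mu)$ supported on $\T$ is exactly how the paper's proposition is set up). The paper disposes of the corollary by citing the editor's commentary to \cite{kriete1043splitting}, so your explicit work on transferring Volberg's statement from $dA+W\,d\m$ to $w_{\alpha-1}\,dA+W^2\,d\m$ addresses a step the paper glosses over: the observations that $W^2\le W/2$ handles the circle side and that $w_{\alpha-1}\le 1$ handles the disk side for $\alpha\ge 1$ are correct, and you are right that $0<\alpha<1$, where $(1-|z|^2)^{\alpha-1}\ge 1$ strengthens the disk norm, is the only delicate range. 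Two cautions about your proposed fixes there. First, a uniform bound $\sup_n\|p_n\|_{H^\infty}\le C$ is not something you can simply ``extract'' from an $L^2$-convergent sequence; it has to be read off from (or built into) Volberg's construction, so this must be checked against the source rather than asserted. Second, your fallback citation overstates the literature: Theorem D of \cite{kriete1043splitting} gives only the negative direction (local integrability of $\log\omega$ on some arc prevents splitting), so non-local-integrability of $\log\omega$ is necessary but not known to be sufficient for splitting --- Volberg's weight is a specific construction whose smallness is essential, not an instance of an iff criterion. The clean way to close the case $0<\alpha<1$ is therefore to verify that Volberg's construction, or the general framework of \cite{kriete1990mean} for measures $G(|z|)\,dA+\omega\,d\m$, is stated for (or adapts to) the standard weights $G=(1-|z|^2)^{\alpha-1}$, rather than to appeal to a nonexistent characterization.
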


The proof of \thref{isomnodense} below will show that if the outer function $b$ is constructed as explained above, then in fact we will have $X_\alpha \cap \hb = \{0 \}$ for any $\alpha > 0$.

The third way in which smooth approximations can be prohibited is by existence of certain inner factors of $b$, and to clarify this we will need to introduce a decomposition. It is easy to see that every singular measure $\nu$ on $\T$ can be uniquely decomposed into mutually singular parts, one concentrated on a countable union of Beurling-Carleson sets of measure zero and one part which vanishes on such sets. More explicitly, \begin{equation}
\label{BCKdecomp} \nu = \nu_\C + \nu_\K \end{equation} and there exists a set $E = \cup_k E_k$ with each $E_k$ being Beurling-Carleson with $|E_k| = 0$ such that $\nu_\C(B) := \nu_\C(B \cap E)$, and $\nu_\K(F) = 0$ for each Beurling-Carleson set $F$ of Lebesgue measure zero. The decomposition in \eqref{BCKdecomp} corresponds to a factorization $S_\nu = S_{\nu_\C}S_{\nu_\K}$. In the class of model spaces, it is the factor $S_{\nu_\K}$ that prohibits smooth approximations (see \cite{smoothdensektheta}). The situation is different for general $\hb$-spaces, and we will see below that it is the location of the mass of $\nu_\K$ that plays a decisive role. 

\begin{cor} \thlabel{Cyclicexample} Let $F$ be the (closed) support of $\Delta$. If $S_{\nu}$ is the inner factor of $b$ and $\nu_\k(\T \setminus F) > 0$, then $X_\alpha \cap \hb$ is not dense in $\hb$, for any $\alpha > 0$. 
\end{cor}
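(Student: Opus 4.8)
The plan is to invoke \thref{maintheorem} and break the division property in part $(ii)$; since $(ii)$ is a conjunction, its failure already forces the negation of $(i)$, so I need not say anything about complete non-isometry of $M_z$. Fix $\alpha > 0$, let $\theta$ be the inner factor of $b$ and $S_\nu$ its singular inner part, and decompose $\nu = \nu_\C + \nu_\K$ as in \eqref{BCKdecomp}. Using $\nu_\K(\T \setminus F) > 0$ together with inner regularity of the finite measure $\nu_\K$ on the open set $\T \setminus F$, I fix a compact set $K \subset \T \setminus F$ with $\nu_\K(K) > 0$ and put $\sigma := \nu_\K|_K$. Then $\sigma$ is a nonzero positive singular measure whose closed support lies at a positive distance from $F$, which charges no Beurling-Carleson set (being dominated by $\nu_\K$) and which satisfies $\sigma \leq \nu$. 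Hence $S_\sigma$ is a nonconstant singular inner divisor of $\theta$, and I write $\theta = \tilde\theta S_\sigma$ with $\tilde\theta$ inner.

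Granting the key fact that $S_\sigma$ is \emph{cyclic} in $\Po^2(\mu)$, that is, that there are polynomials $p_n$ with $\|S_\sigma p_n - 1\|_{\mu,2} \to 0$, the conclusion follows quickly. First, $H^\infty \subset \Po^2(\mu)$: for $f \in H^\infty$ the dilates $f_r(z) = f(rz)$ are uniform limits of polynomials on $\cD$ and converge to $f$ in $L^2(\mu)$ by dominated convergence on the area part $w_{\alpha-1}\,dA$ and on the boundary part $\Delta^2\,d\m$; in particular $\theta \in \Po^2(\mu)$, so $[\theta]$ is well defined. Since $\tilde\theta$ is inner, $|\tilde\theta| \leq 1$ $\mu$-almost everywhere, so using $\theta p_n - \tilde\theta = \tilde\theta(S_\sigma p_n - 1)$ we get
\[
\|\theta p_n - \tilde\theta\|_{\mu,2}^2 = \int_{\cD} |\tilde\theta|^2\,|S_\sigma p_n - 1|^2\,d\mu \leq \|S_\sigma p_n - 1\|_{\mu,2}^2 \to 0,
\]
and therefore $\tilde\theta \in [\theta]$. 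Now $\tilde\theta \in \N^+$, whereas $\tilde\theta/\theta = 1/S_\sigma \notin \N^+$: if $1/S_\sigma = u/v$ with $u,v \in H^\infty$ and $v$ outer, then $v = u S_\sigma$ would carry the nonconstant inner factor $S_\sigma$, contradicting the outerness of $v$. Thus the division property fails, and \thref{maintheorem} gives that $X_\alpha \cap \hb$ is not dense in $\hb$. As $\alpha > 0$ was arbitrary, the corollary follows.

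The main obstacle is precisely the cyclicity assertion $1 \in [S_\sigma]$ in $\Po^2(\mu)$, and this is where the general theory developed in \cite{ptmuinnner} enters. Two properties of $\sigma$ drive it. First, $\sigma$ charges no Beurling-Carleson set, which is exactly the condition making $S_\sigma$ cyclic against the area component $w_{\alpha-1}\,dA$, in the spirit of the Korenblum-Roberts cyclicity theory for weighted Bergman spaces. Second, the closed support of $\sigma$ is disjoint from $F$, so on a neighborhood of $F$ the function $S_\sigma$ is analytic and unimodular on $\T$ with bounded reciprocal; this is what permits $S_\sigma p_n \to 1$ to be realized simultaneously on the boundary component $\Delta^2\,d\m$, whose carrier is precisely $F$. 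Producing a single polynomial sequence that reconciles these two approximations, uniformly in the weight parameter $\alpha > 0$, is the technical core of the matter and is supplied by the cyclicity results of \cite{ptmuinnner}.
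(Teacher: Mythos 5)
Your proposal is correct and follows essentially the same route as the paper, which only sketches the argument: both extract from $\nu_\K$ a nonzero singular inner divisor $S_\sigma$ of $\theta$ supported off $F$ and charging no Beurling-Carleson sets, invoke the cyclicity results of \cite{ptmuinnner} to get $1 \in [S_\sigma]$, and conclude that the division property in condition (ii) of \thref{maintheorem} fails. Your write-up supplies the details (the contraction estimate giving $\tilde\theta \in [\theta]$ and the check that $1/S_\sigma \notin \N^+$) that the paper leaves implicit.
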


This result follows from cyclicity results of \cite{ptmuinnner}. The condition $\nu_\k(\T \setminus F) > 0$ implies that $S_\nu$ contains an inner factor which is a cyclic vector for $M_z$ on $\Po^2(\mu)$, and thus condition (ii) of \thref{maintheorem} on $[S_\nu]$ cannot be satisfied. Note that the set $F$ is the \textit{support} of $\Delta$, which can indeed be larger than the carrier set $E$. If $E$ is a Beurling-Carleson set, then $F = E$. 

We now turn to positive results. We shall construct a very large class of symbols $b$ for which $X_\alpha \cap \hb$ is dense in $\hb$, for any $\alpha > 0$. In particular, the class will contain all $b$ which extend continuously to the boundary and the inner factor $S_\nu$ of $b$ is such that the mass of $\nu_\K$ is placed appropriately and accordingly with the restriction given in \thref{Cyclicexample}. The class is however much larger and the recipe is as follows. 

\begin{enumerate}[(i)]
\item Let $\{E_k\}_k$ be a sequence of Beurling-Carleson subsets of $\T$ of positive measure, $E = \cup_k E_k$ and $b_0$ be any outer function bounded by $1$, satisfying $|b_0| = 1$ on $\T \setminus E$ and \[ \int_{E_k} \log(1-|b_0|^2) d\m > -\infty\] for each set $E_k$,

\item $\nu_\K$ be a singular measure which vanishes on Beurling-Carleson sets of measure zero and for which we have $\nu_\K(\T \setminus E) = 0$,

\item $\nu_\C$ be an arbitrary singular measure concentrated on a countable union of Beurling-Carleson sets of measure zero,

\item $B$ be an arbitrary convergent Blaschke product.
\end{enumerate}

Note how the first two conditions in different ways ensure that the possible "bad" behaviour of $b$ which appears in \thref{hruscevexample}, \thref{volbergexample} and \thref{Cyclicexample} is avoided.  

\begin{cor} \thlabel{mainapproxtheorem}
Let $b = BS_{\nu_\C + \nu_K}b_0$ be constructed according to the above specifications. Then for each $\alpha > 0$, the set $X_\alpha \cap \hb$ is dense in $\hb$. 
\end{cor}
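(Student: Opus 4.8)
The plan is to verify condition (ii) of \thref{maintheorem}, after which the asserted density follows immediately. Since $B$ and $S_{\nu_\C + \nu_\K}$ are inner, we have $|b| = |b_0|$ on $\T$, so the circle weight is $\Delta^2 = 1 - |b_0|^2$, its carrier is precisely $E = \cup_k E_k$, and the inner factor of $b$ is $\theta = B\, S_{\nu_\C + \nu_\K}$. The verification then splits into two independent tasks: showing that $M_z : \Po^2(\mu) \to \Po^2(\mu)$ is completely non-isometric, and establishing the inner division property for this $\theta$.

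For the complete non-isometry, I would first observe that the hypotheses on $b_0$ are designed precisely to avoid the obstructions of \thref{hruscevexample} and \thref{volbergexample}. The carrier $E$ is a countable union of Beurling-Carleson sets of positive measure, so the mechanism of \thref{hruscevexample} does not apply, and on each such set the condition $\int_{E_k} \log(1 - |b_0|^2)\, d\m > -\infty$ says that $\log \Delta$ is integrable there, ruling out the Volberg-type mechanism. The positive content is supplied by the theory developed in \cite{ptmuinnner} (compare also Theorem D of \cite{kriete1043splitting}): log-integrability of $\Delta$ over a Beurling-Carleson set of positive measure guarantees that $\Po^2(\mu)$ admits no isometric summand carried by that set. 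Since $\Delta$ vanishes off $E$, the circle part of $\mu$ lives entirely on $\cup_k E_k$, and assembling the local conclusions over the $E_k$ shows that $\Po^2(\mu)$ contains no nonzero element supported on $\T$, which is equivalent to $M_z$ being completely non-isometric.

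The heart of the argument is the inner division property, and this is where I would invoke the cyclicity and invariant-subspace results of \cite{ptmuinnner}. Factoring $\theta = B\, S_{\nu_\C}\, S_{\nu_\K}$, the Blaschke factor $B$ and the factor $S_{\nu_\C}$, whose singular measure is concentrated on Beurling-Carleson sets of measure zero, are of the type for which the division property is known to persist, exactly as in the smooth-approximation theory for model spaces treated in \cite{smoothdensektheta}. The delicate factor is $S_{\nu_\K}$: here the mass-placement hypothesis $\nu_\K(\T \setminus E) = 0$ is essential, and it is the exact complement of the obstruction in \thref{Cyclicexample}. Because $\nu_\K$ sits on the carrier $E$ of $\Delta$, and because $\log \Delta$ is integrable over the covering Beurling-Carleson sets, the results of \cite{ptmuinnner} yield that $[\theta]$ is a Beurling-type invariant subspace, in the sense that every $f \in \N^+ \cap [\theta]$ factors as $f = \theta h$ with $h \in \N^+$. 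Combining these facts verifies (ii) and completes the proof.

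I expect the main obstacle to be the division property, and specifically the interplay between the singular factor $S_{\nu_\K}$ and the circle weight $\Delta$. The whole point of the mass-placement condition is that the singular inner mass and the support of the part of $\mu$ on $\T$ must coincide, so that the measure "sees" $S_{\nu_\K}$ as a noncyclic, dividable factor rather than a cyclic vector; translating the cyclicity theorems of \cite{ptmuinnner} into the precise Smirnov-class division statement required by \thref{maintheorem} is the technical crux of the argument.
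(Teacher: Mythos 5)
Your proposal is correct and follows essentially the same route as the paper: the paper's entire proof is to invoke \thref{maintheorem} and cite \cite[Theorem 1.2]{ptmuinnner}, where condition (ii) is verified for the measures $\mu(b,\alpha)$ arising from this construction. Your additional commentary on how the hypotheses on $b_0$ and $\nu_\K$ neutralize the obstructions of \thref{hruscevexample}, \thref{volbergexample} and \thref{Cyclicexample} is accurate, but the substantive verification is delegated to the same external reference in both arguments.
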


This result follows from \cite[Theorem 1.2]{ptmuinnner}, where condition (ii) of \thref{maintheorem} is verified for the associated measures $\mu(b,\alpha)$. Note the similarity with the case of non-extreme $b$. In that case $\log(1-|b|^2)$ is integrable on the entire circle $\T$, and then smooth approximations are certainly possible, since then even the analytic polynomials are dense in the space. If the weight $(1-|b|^2)$ lives only on a part of the circle, then $b$ is an extreme point and thus the set of polynomials contained in $\hb$ forms only a finite dimensional subspace. However, if the subset of $\T$ on which the weight $(1-|b|^2)$ lives satisfies our structural assumption, and is $\log$-integrable there, then approximations by functions from smoothness classes $X_\alpha$ are possible (at least when $b$ is outer), similarly to the non-extreme case. 

Before ending this section, and proceeding with the proof of \thref{maintheorem}, we would like to comment on some problems which we did not overcome. One can see that in \thref{mainapproxtheorem}, we have not been able to fully extend to approximations by smooth functions in the class $\A^\infty = \cap_{\alpha > 0} X_\alpha$, that is, by functions analytic in $\D$ with boundary values in $C^\infty$. We believe this to be only a deficiency of our particular method and such approximations should be possible for the class of $\hb$ spaces exhibited in \thref{mainapproxtheorem}. In fact, we believe further development of the general theory of $\Po^2(\mu)$-spaces  for measures $\mu$ of the kind given in \eqref{measure} will allow us to fill in the gap. Given our examples, another natural question is if the parameter $\alpha > 0$ plays a significant role, or if density of $X_\alpha \cap \hb$ in $\hb$ for small $\alpha$ is equivalent to density for all $\alpha$, just as it is in $K_\theta$ (see \cite{smoothdensektheta}).

A third question for which we have not found a satisfactory answer is: what happens if the weight $(1-|b|^2)$ is $\log$-integrable on the carrier set $E$, but this set is not (up to sets of measure zero) a union of a sequence of Beurling-Carleson sets? In this case we can perform the following decomposition of the weight. Consider the quantity \[ \gamma = \sup_F |F| \] where $F$ is a Beurling-Carleson subset of $E$. If $\{F_n\}_n$ is a sequence of Beurling-Carleson subsets of $E$ for which we have $\gamma = \lim_n |F_n|$, then the set $E \setminus (\cup_n F_n)$ has non-zero measure (since else $E\setminus N = \cup_n F_n$ for a set $N$ of measure zero) and certainly satisfies the property appearing in \thref{hruscevexample}. The example in \cite{khrushchev1978problem}, on which \thref{hruscevexample} is based, hints that the operator $M_z$ on $\Po^2(\mu)$ could in this case have a non-trivial isometric part, but we have not been able to settle this.

\section{$X_\alpha$-spaces and their duality}
\label{Xalphasection}

In the following sections we prove \thref{maintheorem}, and we start by recalling some simple properties of the spaces $X_\alpha$.

Recall, for $\alpha > 0$, the definition of the space $X_\alpha$ in \eqref{Xalphadef}. Naturally, we could also allow the parameter $\alpha$ to take on any real value, instead of requiring it be positive as in the earlier sections. We will need this broader range of parameters, and we will now describe a simple but crucial duality relation and a norm equivalence. 

The spaces $X_\alpha$ are Hilbert spaces, but instead of seeing $X_\alpha$ as dual of itself with respect to the inner product, we shall instead find use of the \textit{Cauchy duality} between $X_\alpha$ and $X_{-\alpha}$. The Cauchy duality is realized by \[\lim_{r \to 1} \int_\T f_r\conj{g_r}dm, \quad f \in X_\alpha, g \in X_{-\alpha},\] where $f_r(z) = f(rz)$ and $g_r(z) = g(rz)$ are the usual dilations of $f$ and $g$. Indeed, if $\{f_n\}_{n \geq 0}$ and $\{g_n\}_{n \geq 0}$ are the sequences of Taylor coefficients of $f$ and $g$, then we easily see that for the above limit we have \begin{gather*} \lim_{r \to 1} \sum_{n = 0}^\infty f_n\conj{g_n}r^{2n} = \lim_{r \to 1} \sum_{n = 0}^\infty (n+1)^{\alpha/2} f_n \frac{\conj{g_n}}{(n+1)^{\alpha/2}}r^{2n} \\
= \sum_{n = 0}^\infty (n+1)^{\alpha/2} f_n \frac{\conj{g_n}}{(n+1)^{\alpha/2}} 
\end{gather*} where passage to the limit $r = 1$ is justified by the fact that $\{(n+1)^{\alpha/2} f_n\}_n$ and $\{(n+1)^{-\alpha/2} g_n\}_n$ are square-summable sequences by definition of $X_\alpha$ and $X_{-\alpha}$. Under this pairing $X_{-\alpha}$ is isometrically isomorphic to the dual space of $X_\alpha$. Clearly the Cauchy dual of $X_{-\alpha}$ is $X_{\alpha}$, and thus the usual weak-star topology on $X_{-\alpha}$ coincides with the weak topology on $X_{-\alpha}$. 

Again restricting the parameter $\alpha$ to be positive, a computation shows that for $\alpha$ fixed, we have that \[\int_\D |z|^{2n} (1-|z|^2)^{\alpha -1} dA(z) \simeq \frac{1}{n^\alpha}.\] This implies immediately that the square of the norm on $X_{-\alpha}$ is equivalent to \[ \int_\D |f(z)|^2(1-|z|^2)^{\alpha-1} dA(z). \] 

\section{Isometric part of $M_z$ prohibits smooth approximation}
\label{isomsection}

In this section, we shall treat one of the implications in \thref{maintheorem}. Namely, if the symbol $b$ and $\alpha > 0$ is such that the corresponding $\Po^2(\mu)$-space contains a non-trivial subspace on which $M_z$ acts as an isometric operator, then $X_{\alpha} \cap \hb$ is not dense in $\hb$. One of our main tools for this will be the following structure theorem for $\hb$-spaces in the extreme case. The result appears in \cite{comptesrenduscont}, and is generalized to other classes of spaces in \cite{jfabackshift}. The symbol $P_+$ denotes the orthogonal projection from $L^2$ of the circle $\T$ onto the Hardy space $H^2$.

\begin{prop} \thlabel{normformula}
Let $b$ be an extreme point of the unit ball of $H^\infty$, $$E = \{ \zeta \in \T : |b(\zeta)| < 1 \},$$ and let $\Delta = \sqrt{1-|b|^2}$ be a function on the circle $\T$, defined in terms of boundary values of $b$ on $\T$. For $f\in \Hb$
the equation
 $$P_+ \conj{b}f = -P_+ \Delta g$$
has a unique solution $g\in L^2(E)$, and the map $J:\Hb\to H^2\oplus L^2(E)$  defined by $$Jf=(f,g),$$ 
is an isometry. Moreover,  $$J(\Hb)^\perp = \Big\{ (bh, \Delta h) : h \in H^2 \Big\}.$$ 
\end{prop}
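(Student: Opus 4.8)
The plan is to establish the three assertions in \thref{normformula} in sequence: existence and uniqueness of the solution $g$, the isometric property of $J$, and the explicit description of the orthogonal complement of the range. The starting point is the structural fact about extreme points: $b$ is an extreme point of the unit ball of $H^\infty$ precisely when $\log(1-|b|)$ fails to be integrable on $\T$, equivalently when $\Delta = \sqrt{1-|b|^2}$ is not the modulus of boundary values of any nonzero $H^2$ function. I would first recall the Sarason description of $\hb$ in the extreme case: a function $f$ lies in $\hb$ if and only if $\conj{b}f \in H^2 + \conj{H^2_0}$ in a suitable sense, or more concretely, if and only if there exists $g$ with $P_+(\conj{b}f) = -P_+(\Delta g)$, together with a compatible norm. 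So the first step is to solve the displayed equation. Since $f \in \hb \subset H^2$, the function $\conj{b}f$ lies in $L^2(\T)$, and $P_+\conj{b}f \in H^2$ is well-defined. The claim that $g \in L^2(E)$ exists solving $P_+\Delta g = -P_+\conj{b}f$ should follow from the defining property of membership in $\hb$; uniqueness requires showing that if $P_+\Delta g = 0$ with $g \in L^2(E)$ then $g = 0$. This uniqueness is exactly where the extreme-point hypothesis enters: if $\Delta g \perp H^2$ (so $\Delta g \in \conj{H^2_0}$) with $g$ supported on $E$, then $\Delta g$ is the boundary value of a conjugate-analytic function vanishing at the origin, and one argues via the logarithmic integrability obstruction that this forces $\Delta g = 0$ a.e., hence $g = 0$ on $E$.

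Next I would verify that $J f = (f,g)$ is an isometry into $H^2 \oplus L^2(E)$. The natural route is to compare the norm identity $\|f\|_{\hb}^2 = \|f\|_{H^2}^2 + \|g\|_{L^2(E)}^2$ against the reproducing kernel formula \eqref{kernelform}. I would test on kernel functions: compute $\|k_b(\lambda,\cdot)\|_{\hb}^2 = k_b(\lambda,\lambda)$ directly, and separately compute $\|k_b(\lambda,\cdot)\|_{H^2}^2 + \|g_\lambda\|_{L^2(E)}^2$ where $g_\lambda$ is the solution associated to $f = k_b(\lambda,\cdot)$, and check the two agree. By density of the span of kernels and continuity this extends to all of $\hb$. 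Concretely, the Szeg\H{o} kernel $\frac{1}{1-\conj\lambda z}$ accounts for the $H^2$ part, the term $-\frac{\conj{b(\lambda)}b(z)}{1-\conj\lambda z}$ reflects the defect, and the $\Delta^2$-weighted piece on $E$ supplies exactly the $L^2(E)$ contribution; the algebra should reconcile because $|b|^2 + \Delta^2 = 1$ on $E$. The computation is routine Fourier/projection bookkeeping once the relation $P_+\conj b f = -P_+\Delta g$ is in hand.

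Finally, for the orthogonal complement, I would show that $J(\hb)^\perp = \{(bh,\Delta h) : h \in H^2\}$. One inclusion is to verify that each pair $(bh,\Delta h)$ is orthogonal to every $(f,g) = Jf$. The inner product in $H^2 \oplus L^2(E)$ splits as $\ip{f}{bh}_{H^2} + \ip{g}{\Delta h}_{L^2(E)}$. Writing $\ip{f}{bh}_{H^2} = \ip{\conj b f}{h}$ and using $P_+\conj b f = -P_+\Delta g$ against $h \in H^2$, together with the fact that $\ip{\Delta g}{h} = \ip{g}{\Delta h}$, the two terms should cancel. For the reverse inclusion I would show these pairs span the full orthogonal complement: given any pair orthogonal to the range, I would use the surjectivity of $J$ composed with the solvability of the defining equation to back out the representing $h$, checking that the constraint $P_+\conj b f = -P_+\Delta g$ characterizes the range precisely as the orthogonal complement of $\{(bh,\Delta h)\}$. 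The main obstacle I anticipate is the uniqueness of $g$ and the reverse inclusion for the complement, both of which rest delicately on the extreme-point hypothesis via the non-integrability of $\log(1-|b|)$; the isometry itself, once the equation is solved, is comparatively mechanical. I would be careful that $g$ is genuinely supported on $E$ (where $\Delta > 0$) and that the orthogonality computations respect this support, since off $E$ the weight $\Delta$ vanishes and contributes nothing.
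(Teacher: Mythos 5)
The paper itself offers no proof of this proposition; it is imported from \cite{comptesrenduscont} (see also \cite{jfabackshift}), so there is no internal argument to compare against. Judged on its own terms, your outline contains sound pieces: the uniqueness argument for $g$ via the non-integrability of $\log \Delta$ (which is exactly where extremality enters) is correct, and the kernel computation you propose for the isometry does close up --- the solution attached to $k_b(\lambda,\cdot)$ is $g_\lambda = -\conj{b(\lambda)}\,\Delta k_\lambda|_E$ with $k_\lambda(z)=(1-\conj{\lambda}z)^{-1}$, consistent with \thref{Jkbshiftdense}, and the two sides do reconcile using $|b|^2+\Delta^2=1$. But two steps are genuinely gapped. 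First, the existence of $g$ is circular as written: the ``Sarason description'' you propose to recall ($f\in\hb$ iff $P_+\conj{b}f=-P_+\Delta g$ is solvable, with a compatible norm) is essentially the proposition itself. A proof must derive solvability and the norm identity from an actual definition of $\hb$ --- either the kernel \eqref{kernelform} or the realization of $\hb$ as the range of $(I-T_bT_{\conj{b}})^{1/2}$ with the range norm. Second, the reverse inclusion for the annihilator is not addressed by ``backing out the representing $h$'': that recovers elements of $M:=\{(bh,\Delta h):h\in H^2\}$, whereas what must be shown is that every element of $M^\perp$ lies in $J(\hb)$, i.e.\ that the range of $J$ (closed, since $J$ is an isometry) exhausts $M^\perp$.

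Both gaps are repaired at once by reversing the logic. Note that $M$ is a closed subspace of $H^2\oplus L^2(E)$, since $h\mapsto(bh,\Delta h)$ is an isometry of $H^2$ onto $M$ (again by $|b|^2+\Delta^2=1$). The orthogonality $(f,g)\perp M$ is, by a one-line computation, equivalent to $P_+(\conj{b}f+\Delta g)=0$, and your log-integrability argument shows the first-coordinate projection is injective on $M^\perp$. Then compute
\[ P_{M^\perp}(k_\lambda,0)=\bigl(k_\lambda-\conj{b(\lambda)}bk_\lambda,\,-\conj{b(\lambda)}\Delta k_\lambda\bigr)=\bigl(k_b(\lambda,\cdot),\,-\conj{b(\lambda)}\Delta k_\lambda\bigr). \]
This exhibits the space of first components of $M^\perp$, in the norm transferred from $M^\perp$, as a reproducing kernel Hilbert space with kernel \eqref{kernelform}, hence equal to $\hb$ by uniqueness of the RKHS with a given kernel. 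Existence and uniqueness of $g$, the isometry of $J$, and $J(\hb)^\perp=M$ all follow simultaneously, with no need to treat the kernel functions and the density/polarization step separately.
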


We will also need the following classical result on the structure of $M_z$-invariant subspaces of $L^2(\T)$. For a proof, see for instance \cite{helsonbook}.

\begin{prop} \thlabel{beurling-wiener}
Let $M_z^*$ be the adjoint of the shift on $L^2(E)$. The $M^*_z$-invariant subspaces of $L^2(\T)$ are of the form \[L^2(F) = \{ f \in L^2(\T) : f = 0 \text{ almost everywhere on } \T \setminus F\}\] where $F$ is a measurable subset of $\T$, or of the form \[U \conj{H^2} = \{ U\conj{f} : f \in H^2\}\] where $U$ is a unimodular function.
\end{prop}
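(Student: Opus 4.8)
The plan is to establish the classical Wiener--Helson classification for the \emph{forward} shift $M_z$ (multiplication by $z$) on $L^2(\T)$ and then transfer it to $M_z^*$ by conjugation. Since $|z| = 1$ on $\T$ we have $M_z^* = M_{\conj{z}}$, and a closed subspace $\mathcal{N} \subseteq L^2(\T)$ is $M_{\conj{z}}$-invariant if and only if its complex conjugate $\conj{\mathcal{N}} := \{\conj{f} : f \in \mathcal{N}\}$ is $M_z$-invariant. It therefore suffices to classify $M_z$-invariant closed subspaces and to replace $\mathcal{N}$ by $\conj{\mathcal{N}}$ at the end. The argument then splits according to whether $z\mathcal{N} = \mathcal{N}$ (the doubly invariant case) or $z\mathcal{N} \subsetneq \mathcal{N}$ (the simply invariant case).

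In the doubly invariant case, invariance under multiplication by both $z$ and $\conj{z}$ propagates, via bounded approximation by trigonometric polynomials (for instance Fej\'er means) and $L^2$-convergence, to invariance under multiplication by every $\varphi \in L^\infty(\T)$; the same holds for $\mathcal{N}^\perp$. Hence the orthogonal projection onto $\mathcal{N}$ commutes with the whole maximal abelian algebra of multiplication operators, and such a projection must itself be multiplication by an idempotent of $L^\infty(\T)$, that is, by the indicator $\chi_F$ of some measurable set $F$. This gives $\mathcal{N} = \chi_F L^2(\T) = L^2(F)$, the first family in the statement.

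For the simply invariant case I would first extract a unimodular generator. The wandering subspace $\mathcal{W} := \mathcal{N} \ominus z\mathcal{N}$ is nonzero, so fix a unit vector $U \in \mathcal{W}$. As $z^n U \in z\mathcal{N}$ for $n \geq 1$, orthogonality yields $\int_\T |U|^2 z^n \, dm = 0$ for $n \geq 1$, and conjugation extends this to all $n \neq 0$; thus $|U|^2$ is constant, and since $\|U\| = 1$ we get $|U| = 1$ a.e. Multiplication by the unimodular $U$ is isometric, so $UH^2$ is closed, and being the closed linear span of $\{z^n U\}_{n \geq 0} \subseteq \mathcal{N}$ it satisfies $UH^2 \subseteq \mathcal{N}$. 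Proving that nothing more can occur is the step I expect to be the main obstacle.

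To close the remaining gap I would verify two facts. First, $\mathcal{W}$ is one-dimensional: a unit vector $V \in \mathcal{W}$ with $V \perp U$ is again unimodular by the computation above, and the relations $V \perp z^n U$ ($n \geq 0$) together with $U \perp z^n V$ ($n \geq 1$) force every Fourier coefficient of $V\conj{U} \in L^1(\T)$ to vanish, whence $V\conj{U} = 0$ a.e., contradicting $|V\conj{U}| = 1$. Second, the residual part $\mathcal{N}_\infty := \bigcap_{k \geq 0} z^k \mathcal{N}$ is trivial: it is doubly invariant, hence $\mathcal{N}_\infty = L^2(F_0)$ by the first case, but $U \perp z\mathcal{N} \supseteq \mathcal{N}_\infty$ would give $0 = \int_\T U \conj{U\chi_{F_0}}\, dm = |F_0|$, so $|F_0| = 0$. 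With a one-dimensional wandering subspace and no residual part, the Wold decomposition $\mathcal{N} = \big( \bigoplus_{k \geq 0} z^k \mathcal{W} \big) \oplus \mathcal{N}_\infty$ collapses to $\mathcal{N} = UH^2$. Conjugating back, the $M_{\conj{z}}$-invariant subspaces are exactly $\conj{L^2(F)} = L^2(F)$ and $\conj{UH^2} = \conj{U}\,\conj{H^2}$; renaming $\conj{U}$ as the unimodular symbol recovers the stated dichotomy.
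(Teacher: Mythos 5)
Your proof is correct and complete: the paper offers no argument of its own for this proposition, citing Helson's book instead, and what you have written is precisely the classical Wiener--Helson proof found there (Wiener's theorem via the commutant of the multiplication algebra in the doubly invariant case; unimodularity of a wandering vector, one-dimensionality of the wandering subspace, and triviality of the residual part in the Wold decomposition in the simply invariant case), transported to $M_z^*$ by conjugation. The only point worth flagging is that the statement's mention of ``the shift on $L^2(E)$'' versus ``subspaces of $L^2(\T)$'' is an inconsistency in the paper itself; your reading of the claim as concerning $L^2(\T)$ is the intended one.
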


The following observation is crucial.

\begin{lemma} \thlabel{Jkbshiftdense}
If $J$ is the embedding in \thref{normformula}, then \[Jk_b(\lambda,z) = \Big( k_b(\lambda,z), \frac{-\Delta \conj{b(\lambda)} }{1-\conj{\lambda}z} \Big), \] and if $b(\lambda) \neq 0$, then the $M^*_z$-invariant subspace of $L^2(E)$ generated by $\frac{-\Delta \conj{b(\lambda)} }{1-\conj{\lambda}z}$ equals $L^2(E)$. 
\end{lemma}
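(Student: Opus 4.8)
The plan is to treat the two assertions of the lemma separately: first verify the explicit formula for the second coordinate of $Jk_b(\lambda,\cdot)$ by a direct substitution into the defining equation of \thref{normformula}, and then analyze the $M_z^*$-invariant subspace that this coordinate generates.

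For the formula, recall from \thref{normformula} that the second coordinate $g$ of $Jk_b(\lambda,\cdot) = (k_b(\lambda,\cdot), g)$ is the \emph{unique} element of $L^2(E)$ solving $P_+\conj{b}\, k_b(\lambda,\cdot) = -P_+\Delta g$. I would therefore simply check that the candidate $g(z) = -\Delta(z)\conj{b(\lambda)}/(1-\conj{\lambda}z)$ works. Writing all quantities as boundary functions on $\T$ and using $|b|^2 = 1-\Delta^2$, one gets
\[
\conj{b(z)}\,k_b(\lambda,z) = \frac{\conj{b(z)}-\conj{b(\lambda)}}{1-\conj{\lambda}z} + \frac{\conj{b(\lambda)}\,\Delta^2(z)}{1-\conj{\lambda}z}.
\]
The key computation is that the co-analytic Toeplitz action sends the Szeg\H{o} kernel $s_\lambda(z) := (1-\conj{\lambda}z)^{-1}$ to $P_+[\conj{b}\,s_\lambda] = \conj{b(\lambda)}\,s_\lambda$ (immediate from Taylor coefficients), so $P_+$ annihilates the first term. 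Hence $P_+\conj{b}\,k_b(\lambda,\cdot) = \conj{b(\lambda)}P_+[\Delta^2 s_\lambda]$, which is precisely $-P_+\Delta g$ for the candidate $g$. Since this $g$ is bounded and vanishes off $E$ (where $\Delta=0$), it lies in $L^2(E)$, and uniqueness in \thref{normformula} identifies it as the true second coordinate.

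For the invariant subspace claim, assume $b(\lambda)\neq 0$ and let $\mathcal{M}$ be the closed $M_z^*$-invariant subspace of $L^2(E)$ generated by $g$. Because $\conj{b(\lambda)}\neq 0$ and $s_\lambda$ never vanishes on $\T$, we have $|g|>0$ a.e.\ on $E$ and $g=0$ off $E$, so the support of $g$ is exactly $E$. I will show $\mathcal{M}=L^2(E)$ by proving its orthogonal complement in $L^2(E)$ is trivial. Suppose $\psi\in L^2(E)$ satisfies $\ip{\conj{z}^{\,n}g}{\psi}=0$ for all $n\geq 0$. This says $g\conj{\psi}$ has vanishing Fourier coefficients of every non-negative index, equivalently $\conj{g}\psi\in zH^1$. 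Computing $\conj{g}$ on $\T$ via $\conj{z}=1/z$ gives $\conj{g}=-b(\lambda)\Delta z/(z-\lambda)$, so the condition reads $\Delta z\psi/(z-\lambda)\in H^1$; multiplying by $z-\lambda\in H^\infty$ yields the clean conclusion $F := \Delta z\psi\in H^1$.

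It remains to force $F\equiv 0$, which gives $\psi=0$ since $\Delta>0$ on $E\supseteq\operatorname{supp}\psi$. I expect this to be the main obstacle, and the only place where extremality of $b$ enters. If $|\T\setminus E|>0$, then $F\in H^1$ vanishes on a set of positive measure, hence $F\equiv 0$ by the F.~and M.~Riesz theorem. If instead $E=\T$ up to null sets, so $\Delta>0$ a.e., I argue by contradiction: were $F\not\equiv 0$, then $\log|F|\in L^1(\T)$; since $\log\Delta\leq 0$ we get $\log^-|\psi|\leq \log^-|F|$, which together with $\log^+|\psi|\leq|\psi|\in L^2$ gives $\log|\psi|\in L^1(\T)$. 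But $\log|F|=\log\Delta+\log|\psi|$ a.e., and extremality of $b$ means precisely $\int_\T\log\Delta\,dm = \tfrac12\int_\T\log(1-|b|^2)\,dm = -\infty$, forcing $\int_\T\log|F|\,dm=-\infty$ and contradicting $F\in H^1\setminus\{0\}$. In both cases $\psi=0$, so $\mathcal{M}$ is dense in, hence equal to, $L^2(E)$. (Alternatively, one may organize this last step through \thref{beurling-wiener}: $\mathcal{M}$ is either $L^2(G)$ with $G\subseteq E$, in which case the full support of $g$ forces $G=E$, or of the analytic type $U\conj{H^2}$, which is ruled out by exactly the same $\log$-integrability obstruction.)
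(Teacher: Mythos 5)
Your proof is correct. The first half supplies, in full, the ``simple computation'' that the paper's proof only alludes to: the identity $P_+[\conj{b}s_\lambda]=\conj{b(\lambda)}s_\lambda$ plus uniqueness of the solution $g$ in \thref{normformula} is exactly the right way to verify the formula. For the second half you and the paper rely on the same two facts --- the generator is non-vanishing a.e.\ on $E$, and its modulus is not $\log$-integrable because extremality of $b$ forces $\int_\T\log\Delta\,dm=-\infty$ --- but you deploy them differently. The paper simply cites the Wiener--Helson classification (\thref{beurling-wiener}): the generated subspace is either $L^2(F)$, ruled out for $F\subsetneq E$ by full support, or $U\conj{H^2}$, ruled out because every non-zero element of $U\conj{H^2}$ is $\log$-integrable. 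You instead compute the annihilator directly, reducing $\psi\perp\mathcal{M}$ to $\Delta z\psi\in H^1$ and killing that function by the uniqueness/$\log$-integrability theorem for $H^1$; this is self-contained and avoids the classification theorem entirely (which you then correctly note recovers the paper's argument). Two cosmetic points: the fact that a non-zero $H^1$ function cannot vanish on a set of positive measure is the Privalov-type $\log$-integrability uniqueness theorem rather than the F.~and M.~Riesz theorem, though the statement you use is of course true; and your two cases could be merged, since in either case $F\in H^1\setminus\{0\}$ would force $\log\Delta=\log|F|-\log|\psi|$ to be integrable, contradicting extremality.
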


\begin{proof}
A simple computation involving \thref{normformula} will reveal that the tuple $Jk_b(\lambda,z)$ has the indicated form. Note that $\frac{-\Delta \conj{b(\lambda)} }{1-\conj{\lambda}z}$ is non-zero almost everywhere on $E$, and moreover, this function is not $\log$-integrable. If a non-zero function is contained in $U\conj{H^2}$, then it is $\log$-integrable, and subspaces of the form $L^2(F)$ contain functions vanishing on sets of positive measure unless $F = E$. The claim follows from \thref{beurling-wiener}.
\end{proof}

We will use the above corollary in the proof of the following proposition, which is our first step towards proving \thref{maintheorem}.

\begin{prop} \thlabel{isomnodense}
Let $b$ be an extreme point of the unit ball of $H^\infty$, $\alpha > 0$ and $\mu$ be given by \eqref{measure}. If $M_z: \Po^2(\mu) \to \Po^2(\mu)$ admits a non-trivial invariant subspace on which $M_z$ acts as an isometry, then the set $X_\alpha \cap \hb$ is not dense in $\hb$.
\end{prop}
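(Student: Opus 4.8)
The plan is to exhibit an explicit non-zero bounded linear functional on $\hb$ which annihilates $X_\alpha \cap \hb$. First I would use the hypothesis to produce a witness for the isometric part. Since $w_{\alpha-1}$ is positive and continuous on $\D$, the criterion recalled in the introduction shows that the existence of an invariant subspace on which $M_z$ acts isometrically is equivalent to the existence of a non-zero $G \in \Po^2(\mu)$ whose restriction to $\D$ vanishes, i.e.\ a function living only on $\T$. Fix such a $G$ together with a sequence of polynomials $p_n$ satisfying $p_n \to 0$ in $L^2(w_{\alpha-1}\,dA)$ and $p_n \to G$ in $L^2(\Delta^2 d\m)$. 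Writing $Jf = (f,g_f)$ for the isometric embedding of \thref{normformula}, I would then set
\[
\Lambda(f) := \langle g_f, \Delta G\rangle_{L^2(E)} = \int_E g_f\,\overline{G}\,\Delta\, d\m, \qquad f \in \hb.
\]
Here $\Delta G \in L^2(E)$ since $\|\Delta G\|_{L^2(E)}^2 = \int_E |G|^2 \Delta^2 d\m = \|G\|_{L^2(\Delta^2 d\m)}^2 < \infty$, so $\Lambda$ is a bounded functional on $\hb$ because $\|g_f\|_{L^2(E)} \le \|f\|_{\hb}$.

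To see that $\Lambda \neq 0$, I would show that the second components $\{g_f : f \in \hb\}$ are dense in $L^2(E)$. Indeed, if $v \in L^2(E)$ is orthogonal to every $g_f$, then $(0,v) \perp J(\hb)$, so by the description of the complement in \thref{normformula} we have $(0,v) = (bh, \Delta h)$ for some $h \in H^2$; then $bh = 0$ forces $h = 0$ (as $b \not\equiv 0$), hence $v = 0$. (This density is exactly what \thref{Jkbshiftdense} furnishes through the reproducing kernels.) Since $\Delta G \neq 0$, it cannot be orthogonal to all $g_f$, so $\Lambda \neq 0$. The decisive task is then to prove $\Lambda(f) = 0$ for $f \in X_\alpha \cap \hb$. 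Using $g_f \in L^2(E)$ and $p_n \to G$ in $L^2(\Delta^2 d\m)$, one passes to the limit to obtain $\Lambda(f) = \lim_n \int_\T \Delta g_f\, \overline{p_n}\, d\m$. The defining relation $P_+\overline{b}f = -P_+\Delta g_f$ of \thref{normformula} says that $\overline{b}f + \Delta g_f \in \overline{H^2_0}$, that is, $\Delta g_f = \overline{\psi} - \overline{b}f$ on $\T$ for some $\psi \in H^2$ with $\psi(0) = 0$. Substituting this and noting that $\psi p_n \in H^1$ vanishes at the origin (so the $\overline{\psi}$-term integrates to zero), one is reduced to
\[
\Lambda(f) = -\lim_{n} \int_\T f\,\overline{bp_n}\, d\m = -\lim_n \langle f, bp_n\rangle_{L^2(\T)}.
\]

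The remaining step — showing this limit vanishes — is where the smoothness encoded in $X_\alpha$ is essential, and where a general $f \in \hb$ would genuinely fail, consistent with $\Lambda \neq 0$. I would interpret $\langle f, bp_n\rangle_{L^2(\T)}$ as the Cauchy pairing between $X_\alpha$ and $X_{-\alpha}$ from Section \ref{Xalphasection}, giving $|\langle f, bp_n\rangle_{L^2(\T)}| \le \|f\|_{\alpha}\,\|bp_n\|_{-\alpha}$. The norm equivalence of that section identifies convergence in the disk part of $L^2(\mu)$ with convergence in $X_{-\alpha}$, so $p_n \to 0$ in $X_{-\alpha}$; and since $|b|\le 1$ on $\D$, multiplication by $b$ is a contraction for the integral norm $\int_\D |\cdot|^2(1-|z|^2)^{\alpha-1}\,dA$ equivalent to $\|\cdot\|_{-\alpha}$, whence $bp_n \to 0$ in $X_{-\alpha}$ and $\langle f, bp_n\rangle_{L^2(\T)} \to 0$. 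This yields $\Lambda(f)=0$ and completes the proof. I expect the main obstacle to be precisely this final step: every manipulation carried out inside $L^2(\mu)$ is circular (two expressions for the same quantity), and the circularity is broken only by the multiplier estimate on $X_{-\alpha}$; the delicate point is to reconcile the $\Delta$-weighted convergence $p_n \to G$ on $\T$ with the unweighted $H^2$ and Cauchy pairings forced by \thref{normformula}, and thereby to locate the $X_\alpha$--$X_{-\alpha}$ duality as the exact mechanism by which boundary smoothness enters.
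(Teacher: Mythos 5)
Your proof is correct, and its core computation coincides with the paper's: both arguments extract a nonzero element of $\Po^2(\mu)$ supported on $\T$, take polynomials $p_n$ with $\|bp_n\|_{-\alpha}\to 0$ and $\Delta p_n$ converging in $L^2(E)$, and then pass to the limit in the orthogonality relation $\int_\T s\,\conj{bp_n}\,d\m+\int_E g\,\Delta\conj{p_n}\,d\m=0$ of \thref{normformula}, using the $X_\alpha$--$X_{-\alpha}$ Cauchy duality to kill the first term exactly as you do. Where you genuinely diverge is in certifying that the resulting constraint is nontrivial. The paper works with the full $M_z$-invariant subspace $S$ generated by the isometric vector, shows that the second component $g$ of $Js$ lies in the proper $M_z^*$-invariant subspace $(\Delta S)^\perp$ for every $s\in X_\alpha\cap\hb$, and then invokes \thref{Jkbshiftdense} (which rests on Wiener's description of $M_z^*$-invariant subspaces of $L^2(E)$, \thref{beurling-wiener}, plus the non-$\log$-integrability of $\Delta\conj{b(\lambda)}/(1-\conj{\lambda}z)$) to exhibit concrete unapproximable elements, namely the kernels $k_b(\lambda,\cdot)$ with $b(\lambda)\neq 0$. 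You instead test against the single vector $\Delta G$ and prove $\Lambda\neq 0$ by observing that the second components $\{g_f: f\in\hb\}$ are dense in $L^2(E)$, which follows in two lines from the annihilator description $J(\hb)^\perp=\{(bh,\Delta h):h\in H^2\}$ since $bh=0$ forces $h=0$. Your route is more economical --- it dispenses with \thref{Jkbshiftdense} and \thref{beurling-wiener} entirely --- while the paper's buys explicit witnesses of non-density and the invariant-subspace formulation that it reuses in the remark following the proof (e.g.\ to conclude $Js=(s,0)$ and $X_\alpha\cap\hb=X_\alpha\cap K_\theta$ when the isometric part is all of $L^2(\mu|_\T)$); note that your version recovers that remark just as easily, since $\{\Delta G\}$ then exhausts $L^2(E)$ and forces $g_s=0$.
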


\begin{proof}
Since $\Po^2(\mu) \subseteq \Po^2(w_{\alpha-1}dA) \oplus L^2(\Delta^2 d\m)$, the assumption implies that there exists a tuple of the form $(0, t) \in \Po^2(\mu)$, where $t \in L^2(\Delta^2 d\m)$ is non-zero. Let $S$ be the $M_z$-invariant subspace of $L^2(\Delta^2 d\m)$ generated by $t$. Then $\{0 \} \oplus S$ is contained in $\Po^2(\mu)$. The operator $h \mapsto \Delta h$ is a unitary mapping between $L^2(\Delta^2 d\m)$ and $L^2(E)$, and this mapping commutes with the shift operators on the spaces. It follows that $\Delta S = \{ h \Delta : h \in S \}$ is an $M_z$-invariant subspace of $L^2(E)$. 

For each $h \in S$ there exists a sequence of polynomials $\{p_n\}_{n\geq 1}$ such that $(p_n, p_n) \to (0, h)$ in the norm of $\Po^2(w_{\alpha-1}dA) \oplus L^2(\Delta^2 d\m)$. By multiplying the first coordinate by $b$ and using the norm equivalence presented in Section \ref{Xalphasection}, we clearly are in the following situation:
\[ \|bp_n\|_{-\alpha} \to 0 \] and \[ \int_{E} |\Delta p_n - \Delta h|^2 d\m \to 0.\]
Assume that $s$ is a function in $X_\alpha \cap \hb$ and consider $Js = (s,g)$, where $J$ is the embedding of \thref{normformula}. By the description of the annihilator of $J\hb$ from same theorem, we have that 
\[ 0 = \int_\T s\conj{bp_n} d\m + \int_E g\Delta \conj{p_n} d\m. \] The convergence of $bp_n$ to $0$ is in the norm of the Cauchy dual of $X_\alpha$, and thus in the limit as $n \to \infty$ we obtain \[0 = \int_E g \Delta \conj{h} d\m. \] Since $h \in S$ was arbitrary, we conclude that $g$ is orthogonal to $\Delta S$ in $L^2(E)$. Since $\Delta S \neq \{0\}$, the second component $g$ of $Js$ is thus contained in the proper subspace $(\Delta S)^\perp$ of $L^2(E)$, whenever $s \in X_\alpha \cap \hb$. Note that $(\Delta S)^\perp$ is invariant for $M_z^*$. It follows now from \thref{Jkbshiftdense} that tuples of the form $Js$ cannot approximate tuples of the form $Jk_b(\lambda,z)$ whenever $b(\lambda) \neq 0$, since then the second component of $Jk_b(\lambda, z)$ does not lie in $(\Delta S)^\perp$.
\end{proof}

We remark that if the invariant subspace on which $M_z$ acts as an isometry in \thref{isomnodense} is the space $L^2(\mu|\T)$, then it follows from the proof that we have $Js = (s,0)$ for any $s \in X_\alpha$. It follows then from \thref{normformula} that $s \in K_\theta$, where $\theta$ is the inner factor of $b$. If $b$ is outer, then $X_\alpha \cap \hb = \{ 0\}$.

\section{Necessity of the division property}
\label{divsec}
Before going into the proof of the main result of this section, we make the following observation. By developments of Section \ref{isomsection}, in order to prove \thref{maintheorem}, we may assume below that $M_z$ is completely non-isometric on $\Po^2(\mu)$, and thus $\Po^2(\mu)$ is a space of analytic functions. For any $f \in H^2$, let $\{p_n\}$ be a sequence of polynomials which converges to $f$ in $H^2$ and almost everywhere on $\T$. It is clear that this sequence will also be a Cauchy sequence in our $\Po^2(\mu)$ space, and thus $f$ will be a member of $\Po^2(\mu)$. The part of $f$ which lives on $\T$ will be represented in $\Po^2(\mu)$ by the usual non-tangential boundary values of $f$. Moreover, any bounded analytic function is a multiplier of $\Po^2(\mu)$, and it acts in the natural way on the boundary. 

The following result, together with the previous one, will complete the proof of the implication $(i) \implies (ii)$ in \thref{maintheorem}. 

\begin{prop}
Let $b$ be an extreme point of the unit ball of $H^\infty$, $\alpha > 0$ and $\mu$ be given by \eqref{measure}. Further, let $\theta$ be the inner factor of $b$. Assume that $X_\alpha \cap \hb$ is dense in $\hb$. Then $M_z :\Po^2(\mu) \to \Po^2(\mu)$ is completely non-isometric, and moreover whenever $f \in \N^+$ is contained in $[\theta]$, then $f/\theta \in \N^+$. 
\end{prop}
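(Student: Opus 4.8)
The statement splits into two assertions, and the first is essentially already in hand. If $M_z$ on $\Po^2(\mu)$ admitted a non-trivial invariant subspace on which it acts isometrically, then by \thref{isomnodense} the manifold $X_\alpha\cap\hb$ could not be dense in $\hb$; contraposing the density hypothesis yields complete non-isometry at once. I then invoke the remark preceding the proof: since $w_{\alpha-1}$ is positive on $\D$, complete non-isometry lets me regard $\Po^2(\mu)$ as a space of analytic functions on $\D$ whose boundary traces on $E$ represent the circle component. In particular $f$ and the generator $\theta$ are honest analytic functions, and $f\in[\theta]$ supplies polynomials $p_n$ with $\theta p_n\to f$ in $\Po^2(\mu)$, that is, in the weighted Bergman (equivalently $X_{-\alpha}$) norm on $\D$ and in $L^2(\Delta^2\,d\m)$ on $E$. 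Evaluating the Bergman convergence at points of $\D$, and at the zeros of the Blaschke part of $\theta$ through continuity of derivative evaluations, shows that $F:=f/\theta$ is analytic on $\D$ with $p_n\to F$ pointwise, while on the circle $p_n\to f\conj{\theta}$ in $L^2(\Delta^2\,d\m)$. The goal is to upgrade this analyticity of $F$ to membership in $\N^+$.

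Next I extract a pairing identity from the structure of $\hb$. By \thref{normformula}, for every $s\in\hb$ with $Js=(s,g_s)$ the annihilator relation $\langle s,bh\rangle_{H^2}+\langle g_s,\Delta h\rangle_{L^2(E)}=0$ holds for all $h\in H^2$. Taking $h=\theta p_n\in H^\infty$ and letting $n\to\infty$, I use that multiplication by the bounded function $b$ does not increase the $X_{-\alpha}$-norm (so $b\theta p_n\to bf$ there) and that $\Delta\theta p_n\to\Delta f$ in $L^2(E)$. For $s\in X_\alpha$ the pre-limit term $\langle s,b\theta p_n\rangle_{H^2}$ equals the Cauchy pairing $[s,b\theta p_n]$ of Section \ref{Xalphasection} and hence converges, giving
$$\big[s,bf\big]+\langle g_s,\Delta f\rangle_{L^2(E)}=0,\qquad s\in X_\alpha\cap\hb,$$
where $[\,\cdot\,,\,\cdot\,]$ is the Cauchy pairing between $X_\alpha$ and $X_{-\alpha}$, and where I note $\Delta f\in L^2(E)$ because $f\in\Po^2(\mu)$.

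Now density enters. The functional $s\mapsto\langle g_s,\Delta f\rangle_{L^2(E)}$ is $\hb$-bounded, since $J$ is an isometry and $\Delta f\in L^2(E)$; the displayed identity therefore shows that the Cauchy functional $s\mapsto[s,bf]$, a priori defined only on $X_\alpha$, agrees on the dense subspace $X_\alpha\cap\hb$ with an $\hb$-continuous functional, and so extends continuously to all of $\hb$ as $-\langle g_\cdot,\Delta f\rangle$.

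The remaining step — converting this continuous extendability into the division $f/\theta\in\N^+$ — is the crux, and I expect it to be the main obstacle. My plan is to argue the contrapositive, in the spirit of \thref{isomnodense}. If $f/\theta\notin\N^+$, then, writing $b=\theta b_o$ with $b_o$ outer and using $f\in\N^+$, the inner factor of $f$ fails to absorb some non-trivial inner divisor $\psi$ of $\theta$, so $F=f/\theta$ carries $\psi$ in its denominator. I will test the extended identity against reproducing kernels $k_b(\lambda,\cdot)$, whose $g$-components $-\Delta\conj{b(\lambda)}/(1-\conj\lambda z)$ are computed in \thref{Jkbshiftdense} and are rich enough to generate $L^2(E)$ under $M_z^*$. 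Evaluating the continuous extension on such a kernel produces the Cauchy transform $\conj{b(\lambda)}\int_E \Delta^2\conj{f}/(1-\conj\lambda z)\,d\m$, which I will compare with the value forced by the Cauchy pairing $[\,\cdot\,,bf]$. The singular inner denominator $\psi$ of $F$ should obstruct the consistency of these two evaluations through a Smirnov / F. and M. Riesz rigidity — a Nevanlinna function with $L^2$ boundary trace but an inner function in its denominator cannot be reconciled with such a bounded Cauchy action — producing a contradiction with density. The delicate point, where the interplay between the modulus $\Delta$, the outer part $b_o$, and the location of the singular mass of $\theta$ is decisive, is exactly to show that this obstruction is genuine whenever $\psi$ is non-constant; this is the same phenomenon isolated in \thref{Cyclicexample}.
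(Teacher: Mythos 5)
Your first assertion is handled exactly as in the paper: complete non-isometry is the contrapositive of \thref{isomnodense}, and the remark at the start of Section \ref{divsec} then lets you treat $\Po^2(\mu)$ as a space of analytic functions. The problem is in the second part. By taking $h=\theta p_n$ in the annihilator relation you arrive at the identity $[s,bf]+\langle g_s,\Delta f\rangle_{L^2(E)}=0$ for $s\in X_\alpha\cap\hb$, but this identity is vacuous: it says precisely that $(bf,\Delta f)$ annihilates $Js$, and by \thref{normformula} the tuple $(bf,\Delta f)$ lies in $J(\hb)^\perp$ automatically for any (bounded, say) $f\in H^2$, with no reference to $\theta$. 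No amount of testing against reproducing kernels or invoking F.\ and M.\ Riesz can extract the division property from a relation that every $f$ satisfies, so the step you defer as ``the crux'' cannot be completed from your setup. Indeed you candidly present that step only as a plan, and the plan (a contradiction via the inner factor of $f/\theta$ and Cauchy transforms) is not what is needed.

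The missing idea is to pair with $h=p_n$ rather than $h=\theta p_n$, so that $\theta$ is absorbed differently in the two components. After reducing to bounded $f$ (multiply the approximating sequence by the outer denominator $v$ of $f=u/v$), write $b=\theta b_0$ with $b_0$ outer. Then $bp_n=b_0\theta p_n\to b_0 f$ in $X_{-\alpha}$, while on the circle $|\theta|=1$ a.e.\ gives $\Delta p_n\to\Delta\conj{\theta}f$ in $L^2(E)$. Passing to the limit in $\int_\T s\conj{bp_n}\,d\m+\int_E g_s\Delta\conj{p_n}\,d\m=0$ and using the assumed density of $X_\alpha\cap\hb$ shows that $(b_0 f,\Delta\conj{\theta}f)\perp J\hb$, hence by the explicit description of the annihilator in \thref{normformula} it equals $(bh,\Delta h)$ for some $h\in H^2$; comparing first components gives $b_0f=b_0\theta h$, i.e.\ $f/\theta=h\in H^2\subseteq\N^+$. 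This is a short, direct argument, and no rigidity or contradiction step is required.
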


\begin{proof}
By the above discussion, $M_z$ is completely non-isometric, $\Po^2(\mu)$ is a space of analytic functions, and it remains to show that the division property in $[\theta]$ holds. Assume therefore that $\lim_{n\to\infty} \|\theta p_n - f\|_{\mu,2}  = 0$, where $f = u/v  \in \N^+$, $u,v$ are bounded analytic functions with $v$ being outer, and $\{p_n\}_{n \geq 1}$ is a sequence of bounded analytic functions. By multiplying the convergent sequence by $v$ we can assume that $f$ is bounded.  Let $b_0$ be the outer factor of $b$. By further multiplication by $b_0$ we deduce from the above convergence that \[ \lim_{n\to \infty} \int_\D |bp_n - b_0f|^2 w_{\alpha -1}dA = 0, \] and we also have that\[ \lim_{n \to \infty} \int_E  |\Delta \theta p_n - \Delta f|^2 d\m = \lim_{n \to \infty} \int_E  |\Delta p_n -  \Delta \conj{\theta} f|^2 d\m= 0.\] Now, take any $s \in X_\alpha \cap \hb$. If $Js = (s,g)$, then the above two limits being zero can be used to show, just as in the proof of \thref{isomnodense}, that \[ \int_\T s\conj{b_0f} d\m + \int_E g \Delta \theta \conj{f} d\m = \lim_{n \to \infty} \left( \int_\T s\conj{b p_n} d\m + \int_E g \Delta \conj{p_n} d\m \right) = 0. \] The last equality follows by the orthogonality relation of \thref{normformula}. By the density of the elements of the form $Js$ in $J\hb$, this implies that $(b_0f,\Delta \conj{\theta} f ) \perp J\hb$, and so $(b_0f,\Delta \conj{\theta} f) = (bh, \Delta h)$ for some $h \in H^2$, again by \thref{normformula}. Thus $f/\theta = h \in \N^+$, and the proof is complete.
\end{proof}

\section{Sufficiency of the conditions on $\M_z$ for approximation}
\label{suffsec}
We prove now the implication $(ii) \implies (i)$ of \thref{maintheorem}.

\begin{prop} Let $b$ be an extreme point of the unit ball of $H^\infty$, $\alpha > 0$ and $\mu$ be given by \eqref{measure}. Further, let $\theta$ be the inner factor of $b$. Assume that $M_z: \Po^2(\mu) \to \Po^2(\mu)$ admits no non-trivial invariant subspace on which $M_z$ acts as an isometric operator, and moreover whenever $f \in \N^+$ is in $[\theta]$, then $f/\theta \in \N^+$. Then $X_\alpha \cap \hb$ is dense in $\hb$.
\end{prop}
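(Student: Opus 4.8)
The plan is to prove density by showing that the orthogonal complement of $X_\alpha \cap \hb$ inside $\hb$ is trivial. Working through the isometric embedding $J$ of \thref{normformula}, density of $X_\alpha \cap \hb$ in $\hb$ is equivalent to density of $J(X_\alpha \cap \hb)$ in the closed subspace $J\hb \subset H^2 \oplus L^2(E)$, so it suffices to prove that any $f \in \hb$ with $Jf = (f,g)$ orthogonal to all of $J(X_\alpha \cap \hb)$ must vanish. The first thing I would record is the structural constraint that comes for free from $Jf \in J\hb$: pairing $Jf$ against the generators $(bh, \Delta h)$, $h \in H^2$, of $(J\hb)^\perp$ shows that $\conj{b}f + \Delta g$ has vanishing analytic part, i.e. $\conj{b}f + \Delta g \in \conj{zH^2}$. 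I would keep this in reserve; it is the only constraint on the candidate annihilator that originates in $\hb$ itself, and it must be combined with a genuine supply of test functions to conclude $f = 0$.

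Since the disk algebra $\A$ is always dense in $\hb$ (see \cite{comptesrenduscont}), a convenient second move is to reduce the problem to approximating a \emph{continuous} member $s \in \A \cap \hb$ by functions in $X_\alpha \cap \hb$; only this smoothing step is new. To organize it I would run, in reverse, the computation of Section \ref{divsec}. For $\phi \in \N^+$ lying in the invariant subspace $[\theta] \subset \Po^2(\mu)$, choose polynomials $p_n$ with $\theta p_n \to \phi$ in $\Po^2(\mu)$; multiplying the disk part by the outer factor $b_0$ and using $b = b_0\theta$ gives convergence $\int_\D |bp_n - b_0\phi|^2 w_{\alpha-1}\,dA \to 0$ in the Cauchy dual $X_{-\alpha}$ (via the norm equivalence of Section \ref{Xalphasection}), while the circle part gives $\int_E |\Delta p_n - \Delta\conj{\theta}\phi|^2\,d\m \to 0$. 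Because each $(bp_n, \Delta p_n)$ lies in $(J\hb)^\perp$ and is therefore orthogonal to every $Js$ with $s \in X_\alpha \cap \hb$, passing to the limit (the first term through the $X_\alpha$--$X_{-\alpha}$ pairing, the second in $L^2(E)$) yields the unconditional orthogonality
\[
(b_0\phi,\, \Delta\conj{\theta}\phi) \perp J(X_\alpha \cap \hb), \qquad \phi \in [\theta] \cap \N^+ .
\]
This uses only the structure theorem and no density hypothesis, and it is precisely here that condition $(ii)$ must be fed in: complete non-isometry guarantees that $\Po^2(\mu)$ is a space of analytic functions, so that $[\theta]$ and the limits above are honest functions on $\D$ with meaningful boundary values, and the division property promotes each $\phi$ to $\phi/\theta \in \N^+$, so that $(b_0\phi, \Delta\conj{\theta}\phi) = (b(\phi/\theta), \Delta(\phi/\theta))$ in fact returns to $(J\hb)^\perp$.

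The core step is then to convert the preceding apparatus into the actual construction of smooth members of $\hb$ and to show $J(X_\alpha \cap \hb)$ is dense in $J\hb$. Given $s \in \A \cap \hb$ with $Js = (s, g_s)$, I would realize the approximation of $(s, g_s)$ inside $J\hb$ as a weighted approximation problem governed by $\mu$: after multiplying the disk coordinate by $b_0$ and reading the $\T$-coordinate through the unitary $h \mapsto \Delta h$ between $L^2(\Delta^2\,d\m)$ and $L^2(E)$, the task becomes the approximation of the relevant data by polynomials in $\Po^2(\mu)$, exactly the geometry controlled by the invariant-subspace structure of $M_z$. Complete non-isometry rules out any $M_z^*$-invariant summand $L^2(F)$ of $L^2(E)$ on which the obstruction of \thref{isomnodense} (via \thref{Jkbshiftdense} and \thref{beurling-wiener}) would stall the argument, while the division property is what allows the polynomial approximants $\theta p_n$ to be transferred back to genuine $\hb$-functions whose first coordinates are smooth.

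The main obstacle I anticipate is precisely this last construction: manufacturing a dense family of smooth functions in $\hb$, which amounts to solving the membership relation $P_+(\conj{b}s) = -P_+(\Delta g)$ with $s \in X_\alpha$ for a rich set of data. Neither $b$ nor the reproducing kernels $k_b(\lambda,\cdot)$ need belong to $X_\alpha$, so the smoothness cannot be imported from the symbol and must instead be produced through the $\Po^2(\mu)$-approximation above. The delicate point is that the approximating sequences $\theta p_n$ converge in two different senses simultaneously---in the area (weighted Bergman, hence $X_{-\alpha}$) norm on $\D$ and in $L^2(\Delta^2\,d\m)$ on $\T$---and these two limits must be reconciled into a single coherent element of $\hb$. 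It is exactly in this reconciliation that both complete non-isometry (to keep the limit analytic and free of a circle-only part) and the Smirnov division property (to guarantee the transferred function lies back in $\hb$) are consumed, and I expect the bulk of the technical effort to lie there.
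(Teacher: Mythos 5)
There is a genuine gap: you never actually prove the density. Your preliminary observations are correct and match the paper's toolkit (the membership criterion $\conj{b}f+\Delta g\in\conj{zH^2}$, the reverse run of the Section \ref{divsec} computation showing $(b_0\phi,\Delta\conj{\theta}\phi)\perp J(X_\alpha\cap\hb)$ for $\phi\in[\theta]\cap\N^+$), but the step you label as ``the core step'' --- manufacturing a rich supply of smooth members of $\hb$, or equivalently showing $J(X_\alpha\cap\hb)$ is dense in $J\hb$ --- is described as an anticipated obstacle rather than carried out. Reducing to $s\in\A\cap\hb$ via the density of the disk algebra does not help here, because the difficulty is not the regularity of the target but the absence of any exhibited approximants; and the orthogonality relation you derive for $\phi\in[\theta]\cap\N^+$ only enlarges the known annihilator of $J(X_\alpha\cap\hb)$, which by itself pushes in the wrong direction for density.

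The paper's proof avoids constructing smooth functions entirely, and the move you are missing is a duality argument. One identifies $J(X_\alpha\cap\hb)$, viewed inside $X_\alpha\oplus L^2(E)$, as the common kernel $\cap_{h\in H^2}\ker l_h$ of the functionals given by Cauchy pairing with $(bh,\Delta h)\in X_{-\alpha}\oplus L^2(E)$. If $Jf=(f,g)$ annihilates this common kernel, then $Jf$ lies in the weak*-closure of the linear manifold $\{(bh,\Delta h):h\in H^2\}$; since $X_{-\alpha}\oplus L^2(E)$ is reflexive the weak*- and weak topologies agree, and since the manifold is convex its weak and norm closures agree, so there are $h_k\in H^2$ with $(bh_k,\Delta h_k)\to(f,g)$ in norm. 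This is exactly where the two hypotheses are consumed in a way your sketch does not reach: the norm convergence makes $\{bh_k\}$ Cauchy in $\Po^2(\mu)$, complete non-isometry forces the limit to be the analytic function $f$ with $bg=\Delta f$ on $E$, the division property applied to $f\in[b]\subseteq[\theta]$ gives $f/\theta\in\N^+$ and hence $f/b\in H^2$ by the Smirnov maximum principle, and then the membership criterion collapses to $P_+(f/b)=f/b=0$, so $f=0$. Without this (or some substitute for producing the approximating sequence $(bh_k,\Delta h_k)$), your argument does not close.
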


\begin{proof}
Let us assume that $f \in \h(b)$ is orthogonal to $X_\alpha \cap \h(b)$. We will show that the assumptions in the statement of the proposition imply that $f = 0$. Because the mapping $J$ in \thref{normformula} is an isometry, it follows that $Jf$ is orthogonal to $J(X_\alpha \cap \h(b))$. Note that $J(X_\alpha \cap \h(b))$ is a subset of $X_\alpha \oplus L^2(E)$. 

We claim that, under the Cauchy duality between $X_\alpha$ and $X_{-\alpha}$, we have \begin{equation}
\label{preanneq} J(X_\alpha \cap \h(b)) = \cap_{h \in H^2} \ker l_h,
\end{equation} where $l_h$ is the functional on $X_\alpha \oplus L^2(E)$ given by Cauchy pairing with $(bh, \Delta h) \in X_{-\alpha} \oplus L^2(E)$, $h \in H^2$. Indeed, any pair $(c,d) \in X_\alpha \oplus L^2(E)$ annihilated by $\{l_h\}_{h \in H^2}$ lies in $J(\h(b))$ by \thref{normformula}. Conversely, again by same formula, any function in $J(X_\alpha \cap \h(b))$ will be annihilated by $\{l_h\}_{h \in H^2}$. 

Equip $X_{-\alpha} \oplus L^2(E)$ with the weak*-topology as the dual space of $X_{\alpha} \oplus L^2(E)$. The fact that $Jf$ annihilates $J(X_{\alpha} \cap \h(b))$ and \eqref{preanneq} holds implies that $Jf$ is contained in the weak*-closure of the linear manifold $\{l_h\}_{H^2} \subseteq X_{-\alpha} \oplus L^2(E)$. Since the dual of $X_{-\alpha}$ is $X_{\alpha}$ and vice versa, it follows that the weak*-topology coincides with the weak-topology on $X_{-\alpha} \oplus L^2(E)$. Thus $Jf$ is contained in the weak-closure of $\{l_h\}_{h \in H^2}$. Since $\{l_h\}_{h \in H^2}$ is a convex set, its weak-closure and norm-closure coincide. Thus there exists a sequence $\{h_k\}_{k \geq 1}$ with $h_k \in H^2$ such that \begin{equation}
\label{seqconv} (bh_k, \Delta h_k) \to Jf := (f,g)
\end{equation} in the norm of $X_{-\alpha} \oplus L^2(E)$. It follows from \eqref{seqconv} by multiplication of the second coordinate by $b$, and from norm equality presented in Section \ref{Xalphasection}, that $\{bh_k\}_k$ is a Cauchy sequence in $\Po^2(\mu)$, and thus converges to a function in $\Po^2(\mu)$, which of course must be $f$. We now use the assumption that $M_z$ is completely non-isometric: since $f$ is a function in $H^2$, the discussion in the beginning of Section \ref{divsec} implies that $bg = \Delta f$ and thus that $g = \Delta f/b$ on $E$. Moreover, $Jf = (f, \Delta f/b)$, and by our assumption $f/\theta \in \N^+$, thus actually $f/\theta \in H^2$ by the Smirnov maximum principle. Since we are assuming that $f \in \h(b)$, by \thref{normformula} we get that \begin{equation} \label{projzero}
0 = P_+(\conj{b}f + \Delta g) = P_+(\conj{b}f + \Delta^2 f/b) = P_+(|b|^2f/b + \Delta^2 f/b) = P_+(f/b).
\end{equation} In the above computation we showed that $\conj{b}f + \Delta g = f/b$ on $\T$, and thus $f/b$ has square-integrable boundary values. Since $f/\theta \in H^2$, it follows from the Smirnov maximum principle that $f/b \in H^2$. Thus $f/b$ is an analytic function which projects to $0$ under $P_+$, which implies that $f/b = 0$, and consequently $f=0$.
\end{proof}

Note that, together with developments of Section \ref{isomsection}, the proof of \thref{maintheorem} is now complete.

\bibliographystyle{siam}
\bibliography{mybib}

\Addresses

\end{document}